\newtheorem{lemma}[equation]{Lemma}
\newtheorem{prop}[equation]{Proposition}
\newtheorem{thm}[equation]{Theorem}
\newtheorem{cor}[equation]{Corollary}
\newtheorem{defn}[equation]{Definition}
\theoremstyle{definition}
\newtheorem{exmp}[equation]{Example}
\newtheorem{rmk}[equation]{Remark}
\numberwithin{equation}{section}
\newcommand{\Z}{\mathbf{Z}}
\newcommand{\Q}{\mathbf{Q}}
\newcommand{\F}{\mathbf{F}}
\newcommand{\Hom}[3][{}]{\operatorname{Hom}_{#1}(#2,#3)}
\newcommand{\ord}[2]{\operatorname{ord}_{#1}(#2)}
\newcommand{\GGL}[2]{{G}^{#1}_{#2}} 
\newcommand{\GL}[3][{}]{\operatorname{GL}^{#1}_{#2}(#3)}
\newcommand{\gl}{\GL n{\F_q}}
\newcommand{\Li}{\operatorname{L}}
\newcommand{\cat}[3]{\mathcal{#1}_{#2}^{#3}}  
\newcommand{\IM}[2]{\operatorname{IM}_{#1}(#2)}
\newcommand{\m}{morphism}
\newcommand{\gen}[1]{{\langle}#1{\rangle}}
\newcommand{\syl}[1]{Sylow $#1$-subgroup}
\newcommand{\pol}{polynomial}
\newcommand{\Euc}{Euler characteristic}
\newcommand{\rchi}{\widetilde{\chi}}
\newcommand{\mynote}[1]{\noindent{\textcolor{red}{\textbf{[#1]}}}}
\newcommand{\wh}[1]{\widehat{#1}}
\title{Equivariant Euler characteristics of subspace posets}
\author{Jesper M.~M\o ller}
\address{Institut for Matematiske Fag\\
  Universitetsparken 5\\
  DK--2100 K\o benhavn}
\email{moller@math.ku.dk}
\urladdr{htpp://www.math.ku.dk/~moller}
\thanks{Supported by the Danish National Research Foundation through
  the Centre for Symmetry and Deformation (DNRF92)}
\subjclass[2010]{05E18, 06A07} 
\keywords{Equivariant \Euc, subspace lattice, general linear group,
  generating function, irreducible polynomial} 
\begin{document}
\date{\today}
\begin{abstract}
  The ($p$-primary) equivariant reduced \Euc s of the building for
  the general linear group over a finite field are determined.
\end{abstract}
\maketitle
\tableofcontents

\section{Introduction}
\label{sec:intro}


Let $G$ be a finite group, $\Pi$ a finite $G$-poset, and $r \geq 1$ a
natural number.  The \emph{$r$th equivariant reduced \Euc\/} of the
$G$-poset $\Pi$ as defined by Atiyah and Segal \cite{atiyah&segal89}
and Tamanoi \cite{tamanoi2001}
is the normalized sum
\begin{equation}\label{defn:rchiPiG}
  \rchi_r(\Pi,G) = \frac{1}{|G|} \sum_{X \in \Hom{\Z^r}G} \rchi(C_{\Pi}(X(\Z^r))
\end{equation}
of the reduced \Euc s of the $X(\Z^r)$-fixed $\Pi$-subposets,
$C_{\Pi}(X(\Z^r))$, as $X$ ranges over the set of all homo\m s of
$\Z^r$ to $G$. For example, when $G$ acts trivially on $\Pi$,
$\rchi_r(\Pi,G) = \rchi(\Pi) |\Hom{\Z^r}G|/|G|$ where
$|\Hom{\Z^r}G|/|G|= |\Hom{\Z^{r-1}}{G}/G|$ is the number of conjugacy classes
of commuting $(r-1)$-tuples of elements of $G$ \cite[Lemma
4.13]{HKR2000}.  In this article we specialize to posets of linear
subspaces of finite vector spaces.  Let $q$ be a prime power,
$n \geq 1$ a natural number, $V_n(\F_q)$ the $n$-dimensional vector
space over $\F_q$, ${\Li}_n(\F_q)$ the $\gl$-lattice of subspaces of
$V_n(\F_q)$, and ${\Li}_n^*(\F_q) = {\Li}_n(\F_q) - \{0,V_n(\F_q)\}$
the proper part of ${\Li}_n(\F_q)$ consisting of nontrivial and proper
subspaces.  The general Definition~\eqref{defn:rchiPiG} takes the
following form in this context:

\begin{defn}\label{defn:chir}
  The $r$th, $r \geq 1$, equivariant reduced \Euc\ of the $\gl$-poset
  ${\Li}_n^*(\F_q)$ is the normalized sum
\begin{equation*}
  \rchi_r({\Li}_n^*(\F_q),\GL n{\F_q}) = \frac{1}{|\GL n{\F_q}|}
   \sum_{X \in \Hom {\Z^r}{\GL n{\F_q}}} \rchi(C_{\Li_n^*(\F_q)}(X(\Z^r))) 
\end{equation*}
of the \Euc s of the subposets $C_{\Li_n^*(\F_q)}(X(\Z^r))$ of
$X(\Z^r)$-invariant subspaces as $X$ ranges over all homo\m s of the
free abelian group $\Z^r$ on $r$ generators into the general linear
group $\gl$.
\end{defn}

The generating function for the sequence
$\rchi_r({\Li}_n^*(\F_q),\GL n{\F_q})$, $n \geq 1$, or $r$th
generating function for short, is the power series
\begin{equation}\label{eq:genfct}
  F_r(x,q) = 1+\sum_{n \geq 1} \rchi_r({\Li}_n^*(\F_q),\gl)x^n \in
  \Z[q][[x]], \qquad r \geq 1
\end{equation}
with coefficients in integral polynomials in $q$.


\begin{thm}\label{thm:chirgl} 
  $\displaystyle  F_{r+1}(x,q) = \prod_{0 \leq j \leq r}
  (1-q^jx)^{(-1)^{r-j}\binom{r}{j}}$ for all $r \geq 0$.
\end{thm}

 The first  generating functions $F_r(x,q)$ for $1 \leq r \leq 5$ are
 \begin{equation*} 
    1-x,  \quad   \frac{1-qx}{1-x}, \quad
     \frac{(1-x) (1-q^2x)}{(1-qx)^2}, \quad
    \frac{(1-qx)^3 (1-q^3x)}{(1-x)(1-q^2x)^3}, \quad
     \frac{(1-x)(1-q^2x)^6 (1-q^4x)}{(1-qx)^4(1-q^3x)^4}
   \end{equation*}
When $r=2$, $F_2(x,q)=1+(1-q) \sum_{n \geq 1}x^n$ tells us that
$\rchi_2(\Li^*_n(\F_q),\gl)=1-q$ for all $n \geq 1$ and all prime powers
$q$, and when $r=4$, 
 \begin{multline*}  
   F_4(x,q) =  \frac{(1-qx)^3(1-q^3x)}{(1-q^2x)^3(1-x)} \\ =
   1 + (1-q)^3(x+(3q^2+1)x^2 + (6q^4 - q^3 + 3q^2 + 1)x^3 +
   (10q^6 - 3q^5 + 6q^4 - q^3 + 3q^2 + 1)x^4 + \cdots) 
 \end{multline*}
 tells us that 
 \begin{equation*}
   \rchi_4({\Li}_n^*(\F_q),\gl) =
   \begin{cases}
     (1-q)^3 & n=1 \\
     (1-q)^3(3q^2+1) & n=2 \\
     (1-q)^3(6q^4 - q^3 + 3q^2 + 1) & n=3 \\
     (1-q)^3(10q^6 - 3q^5 + 6q^4 - q^3 + 3q^2 + 1) & n=4
   \end{cases}
\end{equation*}
for all prime powers $q$.

\begin{cor}\label{thm:Frecur}  
  $\displaystyle F_{r+1}(x,q) =
    \exp\big( - \sum_{n \geq 1} (q^n-1)^{r}
    \frac{x^n}{n} \big)$ for all $r \geq 0$.
\end{cor}

We also discuss the $p$-primary equivariant reduced \Euc s,
$\rchi_r({\Li}_n^*(\F_q),\GL n{\F_q},p)$, of the $\GL n{\F_q}$-poset
$\Li_n^*(\F_q)$
for a given prime $p$ (Definition~\ref{defn:pprimeuc}). 
The generating function for the sequence $\rchi_r({\Li}_n^*(\F_q),\GL n{\F_q},p)$, $n
\geq 1$, or
$r$th $p$-primary  generating function for short, is the power series
\begin{equation}\label{eq:primgenfct}
  F_r(x,q,p) = 1+\sum_{n \geq 1} \rchi_r({\Li}_n^*(\F_q),\gl,p)x^n \in
  \Z[[x]], \qquad r \geq 1
\end{equation}
with integer coefficients. We have $F_r(x,q,p)=1-x$ if $r=1$, or $r
\geq 1$ and $q$ is a power of $p$. When $q$ is not a power of $p$,
$F_r(x,q,p)$ depends only on the closure
$\overline{\gen q}$ of the cyclic subgroup generated by $q$ in the
topological group $\Z_p^\times$ of $p$-adic units
(Lemma~\ref{lemma:closure}).  In any case, the $(r+1)$th $p$-primary
generating function is obtained from the $(r+1)$th generating function of
Corollary~\ref{thm:Frecur} simply by replacing the factor $(q^n-1)^r$ by its
$p$-part, $(q^n-1)^r_p$.

\begin{thm}\label{thm:Frecurpprim}  
$\displaystyle
 F_{r+1}(x,q,p) =
   \exp\big( - \sum_{n \geq 1} (q^n-1)_p^{r}
    \frac{x^n}{n} \big)$
  for all $r \geq 0$.
\end{thm}

It is immediate from the elementary Lemma~\ref{lemma:varEulertrans} that
the product
expansions of the generating functions
of Corollary~\ref{thm:Frecur} and Theorem~\ref{thm:Frecurpprim}
are
\begin{alignat*}{3}
  F_{r+1}(x,q) &= \prod_{k \geq 1} (1-x^k)^{a_{r+1}(k,q)},
  &&\qquad
  &a_{r+1}(k,q) &= \frac{1}{k}\sum_{d \mid k} \mu(k/d)(q^d-1)^{r} \\
  F_{r+1}(x,q,p) &= \prod_{k \geq 1} (1-x^k)^{a_{r+1}(k,q,p)},
  &&\qquad
  &a_{r+1}(k,q,p) &= \frac{1}{k}\sum_{d \mid k} \mu(k/d)(q^d-1)_p^{r}
\end{alignat*} 
for all $r \geq 1$.




Using 
partitions we can express the equivariant reduced \Euc s more
explicitly. Let us also introduce $\rchi_{r}^{-1}({\Li}^*_n(\F_q),\GL n{\F_q})$ and
$\rchi_{r}^{-1}({\Li}^*_n(\F_q),\GL n{\F_q},p)$ for the coefficients of $x^n$ in the
reciprocal power series $F_r(x,q)^{-1}$ and $F_r(x,q,p)^{-1}$,
respectively. Then (Proposition~\ref{cor:GGL}, \eqref{eq:rchiinv}, Proposition~\ref{prop:primGGL}, \eqref{eq:FRpqx-1})
\begin{equation*}
  \rchi_{r}^{\pm 1}({\Li}^*_n(\F_q),\GL n{\F_q})
  =  \frac{1}{n!}\sum_{\lambda \vdash n} (\mp
  1)^{|\lambda|} T(\lambda) \prod_{d \in \lambda} (q^d-1)^r, \quad
  \rchi_{r}^{\pm 1}({\Li}^*_n(\F_q),\GL n{\F_q},p)
  = \frac{1}{n!}\sum_{\lambda \vdash n} (\mp
  1)^{|\lambda|} T(\lambda) \prod_{d \in \lambda} (q^d-1)_p^r
\end{equation*}
where $T(\lambda)$, for each partition $\lambda$ of $n$, is the number
of elements in the symmetric group $\Sigma_n$ of cycle type
$\lambda$. The functions $\rchi_{2}^{-1}({\Li}^*_n(\F_q),\GL n{\F_q})$
and $\rchi_{2}^{-1}({\Li}^*_n(\F_q),\GL n{\F_q},p)$ count semi-simple
and $p$-singular semi-simple classes in $\GL n{\F_q}$, respectively
(Corollary~\ref{cor:F2p}).

Tables~\ref{fig:rchigln2} and \ref{fig:rchigln23} contain examples of concrete
values of ($p$-primary)  equivariant reduced \Euc s.

Equivariant \Euc s have connections to representation theory,
combinatorics, and topology.  The Kn\"orr--Robinson conjecture
\cite{knorr_robinson:89, thevenaz92poly, thevenaz93Alperin} (a
reformulation of the (non block-wise) Alperin conjecture) predicts
that
\begin{equation*}
  \rchi_2(\cat SG{p+*},G) + z_p(G) = 0
\end{equation*}
where $\cat SG{p+*}$ is the Brown $G$-poset of nontrivial
$p$-subgroups of $G$ and $z_p(G)$ the number of irreducible complex
representations of $G$ of dimension divisible by $|G|_p$. According to
Quillen \cite[Theorem 3.1]{quillen78},
$\rchi_r(\Li^*_n(\F_q),\GL n{\F_q}) = \rchi_r(\cat S {\GL
  n{\F_q}}{s+*},\GL n{\F_q})$ where
$s$ is the characteristic of the field
$\F_q$. Since $z_s(\GL n{\F_q})=q-1$ and the second equivariant
reduced \Euc\
$\rchi_2(\Li^*_n(\F_q),\GL n{\F_q}) = 1-q$, we have verified the
Kn\"orr--Robinson conjecture for $\GL n{\F_q}$ at its defining
characteristic. This result is not new, however, as it was proved
already by Th\'evenaz \cite{thevenaz92poly}, but the approach used
here may qualify as a candidate to the combinatorial proof envisioned
in \cite[Introduction, (1)]{thevenaz92poly}. As observed also by
Th\'evenaz \cite[Theorem A, B]{thevenaz92poly}, and investigated
further in Section~\ref{sec:polyn-ident-part}, the equivariant \Euc s
of the general linear groups, and presumably also of other families of
finite groups of Lie type, lead to combinatorial \pol\ identities. The
connections to algebraic topology go through the $G$-space $|\Pi|$,
the topological realization of the $G$-poset $\Pi$.  It is convenient
now to switch to the (unreduced) equivariant \Euc s
$\chi_r(\Pi,G) = \rchi_r(\Pi,G) + |\Hom{\Z^{r-1}}{G}/G|$.  The first
equivariant \Euc\ of $(\Pi,G)$ is the usual \Euc\ of the quotient
space $|\Pi|/G$ (Proposition~\ref{prop:chi1}). The second equivariant
\Euc\ of $(\Pi,G)$ is the \Euc\ of the $G$-space $|\Pi|$ computed in
$G$-equivariant complex $K$-theory \cite[Theorem
1]{atiyah&segal89}. Finally, the $r$th $p$-primary equivariant \Euc ,
$\chi_r(\Pi,G,p)=\rchi_r(\Pi,G,p) + |\Hom{\Z_p^{r-1}}{G}/G|$, is the
\Euc\ of the homotopy orbit space $|\Pi|_{hG}$ computed in Morava
$K(r)$-theory at $p$ \cite{HKR2000} \cite[2-3, 5-1]{tamanoi2001}
\cite[Remark 7.2]{jmm:partposet2017}.

See \cite{tamanoi2001, jmm:partposet2017} for ($p$-primary)
equivariant \Euc s of Boolean and partition posets.


The following notation will be used in this article:

\begin{tabular}[h]{c|l}
  $p$ & a prime number \\
  $\nu_p(n)$ &  the $p$-adic valuation of $n$ \\
  $n_p$ &  the $p$-part of the natural number $n$ ($n_p=p^{\nu_p(n)}$) \\
  $\Z_p$ &  the ring of $p$-adic integers \\
  $q$ &  a prime power \\
  $\F_q$ &  the finite field with $q$ elements \\
  $\IM nq$ &  number of Irreducible Monic \pol s $f \in
             \F_q[t]$
             of degree $n$ with $f(0) \neq 0$\\
  $\IM n{q,p}$ &  number of Irreducible Monic \pol
                 s $f \in
             \F_q[t]$
             of degree $n$ and $p$-power order with $f(0) \neq 0$\\
  $\ord ab$ & smallest natural number $e$ such that $b^e \equiv 1
              \bmod a$ ($a$, $b$ are natural numbers with
              $\GCD{a,b}=1$) \\
$\rchi_r(n,q)$ & $\rchi_r({\Li}_n^*(\F_q),\gl)$ (Definition~\ref{defn:chir}) \\
$\rchi_r(n,q,p)$ & $\rchi_r({\Li}_n^*(\F_q),\gl,p)$ (Definition~\ref{defn:pprimeuc}) 
\end{tabular}

\section{Equivariant \Euc s of subspace posets}
\label{sec:equivariant-euc-s}


The definition \eqref{defn:rchiPiG} of the first equivariant \Euc ,
\begin{equation*}
  \rchi_1(\Pi,G) = \frac{1}{|G|} \sum_{g \in G} \rchi(C_{\Pi}(g))
\end{equation*}
closely resembles the not-Burnside lemma
\cite[Lemma~7.24.5]{stanley99}\label{lemma:burnside}
or the Lefschetz formula
\cite[Exercise 4, p 225]{dieck}.
The topological realization functor takes the $G$-poset $\Pi$ to the
$G$-space $|\Pi|$ and the following proposition is nothing surprising so the
proof will be omitted.

\begin{prop}\label{prop:chi1}
  $\rchi_1(\Pi,G) = \rchi(|\Pi|/G)$
\end{prop}


Let $f_0,f_1 \colon \Pi \to \Pi$ be two poset endo\m s of the poset
$\Pi$. We write $f_0 \leq f_1$ if $f_0(x) \leq f_1(x)$ for all
$x \in \Pi$ and $f_0 \sim f_1$ if $f_0$ and $f_1$ belong to the same
class under the equivalence relation generated by this relation.
The equivalence relation $f_0 \sim_G f_1$ between $G$-poset
endo\m s of the $G$-poset $\Pi$ is defined similarly.  The poset $\Pi$
is {\em poset contractible\/} if there exists a point $x_0 \in \Pi$ such
that $1_{\Pi} \sim x_0$ where $1_{\Pi}$ is the identity map. The
$G$-poset $\Pi$ is {\em $G$-poset-contractible\/} if there exists a point
$x_0 \in C_{\Pi}(G)$ such that $1_{\Pi} \sim_G x_0$.  The realization
of a ($G$-)poset contractible poset is a ($G$-)contractible topological space
\cite[\S1.3]{quillen78}.  If $\Pi$ is $G$-poset-contractible then the
subposets $C_{\Pi}(X)$ are poset contractible for all $X \in
\Hom{\Z^r}G$. Thus we have
\begin{equation}\label{eq:CGAcontract}
  \text{$\Pi$ is poset contractible} \implies \rchi(\Pi)=0, \qquad
  \text{$\Pi$ is $G$-poset-contractible} \implies \forall r \geq 1 \colon \rchi_r(\Pi,G)=0
\end{equation}
For instance, the Brown poset $\cat SG{p+*}$ of nontrivial
$p$-subgroups of $G$ is $G$-poset-contractible and $\rchi_r(\cat SG{p+*},G)=0$ for
all $r \geq 1$ if $G$ admits a nontrivial normal
$p$-subgroup \cite[Proposition~2.4]{quillen78}. We shall see
something similar in Lemma~\ref{lemma:preg}.

Here is a basic recursive relation between equivariant reduced
\Euc s.

\begin{lemma}\label{lemma:genrecur}
  The $r$th equivariant \Euc\ \eqref{defn:rchiPiG}  of $(\Pi,G)$ is
  \begin{equation*}
  \rchi_r(\Pi,G) = \sum_{X \in \Hom{\Z}G/G} \rchi_{r-1}(C_{\Pi}(X),C_G(X))    
  \end{equation*}
   where the sum extends over conjugacy classes of elements in $G$ and $r
   \geq 2$.
\end{lemma}
\begin{proof}
  Any homo\m\ $X \in \Hom{\Z^r}G$ corresponds to a unique pair of homo\m s $(X_1,X_2)$ with
  $X_1 \in \Hom{\Z}G$ and $X_2 \in \Hom{\Z^{r-1}}{C_G(X_1)}$.
  The subposet of $\Pi$ fixed by $X$ is the subposet fixed by $X_2$ in
  the subposet of $\Pi$ fixed by $X_1$, $C_{\Pi}(X) = C_{C_{\Pi}(X_1)}(X_2)$. The $r$th equivariant \Euc\ \eqref{defn:rchiPiG}    of $(\Pi,G)$ is
  \begin{multline*}
    \rchi_r(\Pi,G) = \frac{1}{|G|} \sum_{X \in \Hom{\Z^r}G} \rchi(C_{\Pi}(X)) =
    \frac{1}{|G|} \sum_{X_1 \in \Hom{\Z}G} \sum_{X_2 \in \Hom{\Z^{r-1}}{C_G(X_1)}} \rchi(C_{C_{\Pi}(X_1)}(X_2))  \\=
    \frac{1}{|G|} \sum_{X_1 \in \Hom{\Z}G} |C_G(X_1)| \rchi_{r-1}(C_{\Pi}(X_1),C_G(X_1)) =
    \sum_{X_1 \in \Hom{\Z}G/G} \rchi_{r-1}(C_{\Pi}(X_1),C_G(X_1))
  \end{multline*}
  where the last sum runs over conjugacy classes of elements in $G$. 
\end{proof}

We also need to know that the $r$th equivariant reduced \Euc\ is
multiplicative. For any lattice $L$, we write
$L^* = L - \{\wh 0, \wh 1\}$ for the proper part of $L$ of all
non-extreme elements.

\begin{lemma}\label{lemma:mult}   
  The function is multiplicative in the sense that
\begin{equation*}
  \rchi_r(\big(\prod_{i \in I} L_i \big)^*, \prod_{i \in I} G_i)
  =  \prod_{i \in I} \rchi_r(L_i^*,G_i)
\end{equation*}
for any finite set of $G_i$-lattices $L_i$, $i \in I$, and any $r \geq 1$.
\end{lemma}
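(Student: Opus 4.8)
The plan is to reduce the statement to the classical multiplicativity of the reduced \Euc\ under products of bounded lattices, after unwinding the defining sum in Definition~\eqref{defn:rchiPiG}. Write $L=\prod_{i\in I}L_i$ and $G=\prod_{i\in I}G_i$. Each $G_i$ acts on the lattice $L_i$ by lattice automorphisms, so $G$ acts on the product lattice $L$ coordinatewise, again by automorphisms. The first step is to decompose the index set of the sum using the canonical bijection $\Hom{\Z^r}{G}=\prod_{i\in I}\Hom{\Z^r}{G_i}$, under which a homomorphism $X$ corresponds to its tuple of components $(X_i)_{i\in I}$. Since $|G|=\prod_{i\in I}|G_i|$, the normalizing prefactor $1/|G|$ factors accordingly.

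The second step is to identify the fixed subposets. For $X=(X_i)$ put $A_i=X_i(\Z^r)\leq G_i$ and $A=X(\Z^r)=\prod_{i\in I}A_i$. A tuple $(v_i)\in L$ is fixed by $A$ exactly when each $v_i$ is fixed by $A_i$, so the fixed points of the full lattice $L$ form the sublattice $\prod_{i\in I}C_{L_i}(A_i)$ (fixed points of a group acting by lattice automorphisms are closed under meet and join), and its extreme elements coincide with $\wh 0$ and $\wh 1$ of $L$. Deleting these identifies the fixed subposet of the proper part with the proper part of the product sublattice,
\begin{equation*}
  C_{L^*}(A) = \Big(\prod_{i\in I}C_{L_i}(A_i)\Big)^* ,
\end{equation*}
and likewise $\big(C_{L_i}(A_i)\big)^* = C_{L_i^*}(A_i)$ in each factor.

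The crux is then the multiplicativity of the reduced \Euc\ over products of bounded lattices. By Philip Hall's theorem the reduced \Euc\ of the proper part of a bounded lattice $M$ equals its \Mb\ number $\mu_M(\wh 0,\wh 1)$, and the \Mb\ function of a product of posets is the product of the \Mb\ functions of the factors; hence $\rchi\big((\prod_{i\in I}M_i)^*\big)=\prod_{i\in I}\rchi(M_i^*)$. Applying this to $M_i=C_{L_i}(A_i)$ gives $\rchi(C_{L^*}(A))=\prod_{i\in I}\rchi(C_{L_i^*}(A_i))$. Substituting into the decomposed sum, the summand factors over $i$, the sum over the product index set splits as a product of sums, and each factor is exactly $\rchi_r(L_i^*,G_i)$, yielding the claim. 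The main obstacle is precisely this product formula for the reduced \Euc; the remaining steps are routine bookkeeping of the defining sum, and the general finite product follows either directly as above or by induction on $|I|$ from the two-factor case.
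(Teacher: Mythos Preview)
Your proof is correct and follows essentially the same route as the paper: decompose $\Hom{\Z^r}{G}$ over the factors, identify the fixed subposet as the proper part of a product of fixed sublattices, invoke the multiplicativity $\rchi\big((\prod_i M_i)^*\big)=\prod_i\rchi(M_i^*)$, and factor the resulting sum. The only difference is that you supply a justification for that key identity via Hall's theorem and the multiplicativity of the \Mb\ function, whereas the paper simply quotes it as a known fact about reduced \Euc s.
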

\begin{proof}
  This follows immediately from the similar multiplicative rule,  
  $\rchi((\prod_{i \in I}L_i)^*) =  \prod_{i \in I}
  \rchi(L_i^*)$, valid for usual \Euc s. Using this property, and
  assuming for simplicity that the index set $I=\{1,2\}$ has just two
  elements, we get
\begin{gather*}
  |G_1 \times G_2|\rchi_r((L_1 \times L_2)^*, G_1 \times G_2) = 
  \sum_{X \in \Hom{\Z^r}{G_1 \times G_2}} \rchi(C_{(L_1 \times L_2)^*}(X(\Z^r)) \\ =
  \sum_{X_1 \in \Hom{\Z^r}{G_1}}\sum_{X_2 \in \Hom{\Z^r}{G_2}}
  \rchi(C_{(L_1 \times L_2)^*}((X_1\times X_2)(\Z^r))  \\
  =\sum_{X_1 \in \Hom{\Z^r}{G_1}}\sum_{X_2 \in \Hom{\Z^r}{G_2}}
  \rchi((C_{L_1}(X_1(\Z^r)) \times C_{L_2}(X_2(\Z^r)))^*) \\ =
   \sum_{X_1 \in \Hom{\Z^r}{G_1}}\sum_{X_2 \in \Hom{\Z^r}{G_2}}
   \rchi(C_{L_1^*}(X_1(\Z^r)) \times \rchi(C_{L_2^*}(X_2(\Z^r)) \\
   = \sum_{X_1 \in \Hom{\Z^r}{G_1}} \rchi(C_{L_1^*}(X_1(\Z^r))
   \sum_{X_2 \in \Hom{\Z^r}{G_2}} \rchi(C_{L_1^*}(X_1(\Z^r))  =
   |G_1| \rchi_r(L_1,G_1) |G_2| \rchi_r(L_2,G_2)
 \end{gather*}
 for any $r \geq 1$.
\end{proof}

We now turn to the case where the poset is $\Pi = {\Li}_n^*(\F_q)$ and
the group is $G=\GL n{\F_q}$.
To simplify notation, we shall often write $\rchi_r(n,q)$ for $\rchi_r({\Li}_n^*(\F_q),\gl)$.

\begin{prop}\label{prop:neq1}
  Suppose that $r=1$ or $n=1$.
  \begin{enumerate}
  \item  When $r=1$, $\rchi_1(n,q)=-\delta_{1,n}$ is $-1$ for $n=1$
    and $0$ for all $n>1$. \label{prop:neq11}
  \item   When $n=1$, $\rchi_r(1,q) =- (q-1)^{r-1}$ for all $r \geq 1$.
   \label{prop:req1}
  \end{enumerate}
\end{prop}
\begin{proof}
  The space $| {\Li}_n^*(\F_q)|$ is the simplicial complex of
  flags in $V_n(\F_q)$.  The $\gl$-orbit of a flag is described by the
  dimensions of the subspaces in the flag. Thus the quotient
  $|{\Li}_n^*(\F_q)|/\gl$ is the simplicial complex of all subsets of
  $\{1,\ldots,n-1\}$, an $(n-2)$-simplex, $\Delta^{n-2}$. By
  Proposition~\ref{prop:chi1}, $\rchi_1(n,q)$ is the usual reduced
  \Euc\ of the quotient, $\rchi(\Delta^{n-2})$, which is $-1$ when
  $n=1$ and $0$ when $n>1$. (Alternatively, this is a special case of
  Webb's theorem \cite[Proposition~8.2.(i)]{webb87}.)

  When $n=1$,
  $\Li_1^*(\F_q)=\emptyset$ is empty and  as $\rchi(\emptyset)=-1$, the
  $r$th equivariant \Euc\ is
  \begin{equation*}
  \rchi_r(1,q) = -|\Hom{\Z^r}{\GL 1{\F_q}}|/|\GL 1{\F_q}| =
  -(q-1)^{r-1}  
  \end{equation*}
for all $r \geq 1$.
\end{proof}

According to Proposition~\ref{prop:neq1}.\eqref{prop:neq11},
the first generating function,
$F_1(x,q)= 1 + \sum_{n \geq 1} \rchi_1(n,q) x^n = 1-x$, is independent of
$q$. We aim now for a recursion leading to the other generating
functions $F_r(x,q)$ for $r>1$. The next lemma reduces the problem significantly.

\begin{lemma}\label{lemma:preg}
  Let $A$ be an abelian subgroup of $\gl$ where $n > 1$. If
  $\GCD {|A|,q} \neq 1$,  then the $C_{\GL n{\F_q}}(A)$-poset
  $C_{{\Li}_n^*(\F_q)}(A)$ is $C_{\GL n{\F_q}}(A)$-contractible.
\end{lemma}
\begin{proof}
  The assumption is that the abelian group $A$ contains an element of
  order $s$, the characteristic of $\F_q$.  Let
  $F = V_n(\F_q)^{O_s(A)}$ be the subspace of vectors in $V_n(\F_q)$
  fixed by the nontrivial \syl s\ $O_s(A)$ of $A$. $F$ is a nontrivial subspace
  since $s$-groups actions on $\F_s$-vector spaces fix a nonzero
  vector \cite[Proposition VI.8.1]{brown82}. $F$ is a proper
  subspace since the nontrivial group $O_s(A)$ acts faithfully on
  $V_n(\F_q)$. $F$ is invariant under $A$ since $(vg)h = (vh)g = vg$
  for all $v \in F$, $g \in A$, $h \in O_s(A)$. Thus $F$ belongs to
  $C_{{\Li}_n^*(\F_q)}(A)$. For any $U \in C_{{\Li}_n^*(\F_q)}(A)$,
  $U \cap F = U^{O_s(A)}$ is of course proper and also
  nontrivial by \cite[Proposition
  VI.8.1]{brown82} again. Since $U \geq U \cap F \leq F$ for all
  $U \in C_{{\Li}_n^*(\F_q)}(A)$, the poset $C_{{\Li}_n^*(\F_q)}(A)$
  is poset contractible \cite[\S1.5]{quillen78}. It is even
  $C_{\GL n{\F_q}}(A)$-poset-contractible since $F^g=F$ and $U^g \cap F =
  U^g \cap F^g = (U \cap F)^g$ for all $g \in C_{\GL n{\F_q}}(A)$, $U \in C_{{\Li}_n^*(\F_q)}(A)$.
\end{proof}

\begin{cor}\label{cor:recur}
   When $r,n \geq 1$, the $(r+1)$th equivariant \Euc\ of the $\gl$-poset
  $\Li^*_n(\F_q)$ is 
  \begin{equation*}
    \rchi_{r+1}(n,q) = \sum_{\substack{[g] \in [\GL n{\F_q}] \\ \GCD {q,|g|}=1}}
    \rchi_{r}(C_{\Li_n^*(\F_q)}(g), C_{\gl}(g))
  \end{equation*}
\end{cor}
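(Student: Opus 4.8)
The plan is to peel off the first generator of $\Z^r$ and invoke Lemma~\ref{lemma:preg}. Write $G=\gl$, and assume $n>1$ (the case $n=1$ is immediate from Proposition~\ref{prop:neq1}). A homomorphism $X\colon\Z^r\to G$ is determined by the pairwise commuting images of the $r$ standard generators; recording $g=X(e_1)\in G$ separately identifies $\Hom{\Z^r}G$ with the set of pairs $(g,Y)$, where $g\in G$ and $Y\in\Hom{\Z^{r-1}}{C_G(g)}$ is the homomorphism carried by the remaining $r-1$ generators, which necessarily land in $C_G(g)$. Under this identification $X(\Z^r)=\langle g,Y(\Z^{r-1})\rangle$, so the defining sum becomes
\[
  |G|\,\rchi_r(n,q)=\sum_{g\in G}\ \sum_{Y\in\Hom{\Z^{r-1}}{C_G(g)}}
  \rchi\big(C_{\Li_n^*(\F_q)}(\langle g,Y(\Z^{r-1})\rangle)\big).
\]

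Next I would discard every term with $\GCD(q,|g|)\neq1$. For such a $g$ the order of the abelian group $\langle g,Y(\Z^{r-1})\rangle$ is divisible by $|g|$ and hence also not prime to $q$, so Lemma~\ref{lemma:preg} (valid since $n>1$) makes the corresponding summand vanish; the outer sum may therefore be restricted to the elements $g$ with $\GCD(q,|g|)=1$. The key observation is then the tautology
\[
  C_{C_{\Li_n^*(\F_q)}(g)}\big(Y(\Z^{r-1})\big)
  = C_{\Li_n^*(\F_q)}\big(\langle g,Y(\Z^{r-1})\rangle\big),
\]
expressing that a proper nontrivial subspace is fixed by $g$ and by $Y(\Z^{r-1})$ exactly when it is fixed by the group they generate. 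Since $C_G(g)$ acts on the poset $C_{\Li_n^*(\F_q)}(g)$ of $g$-invariant subspaces, the inner sum is, directly by Definition~\eqref{defn:rchiPiG} of the $(r-1)$st equivariant reduced \Euc\ of the $C_G(g)$-poset $C_{\Li_n^*(\F_q)}(g)$, equal to $|C_G(g)|\,\rchi_{r-1}(C_{\Li_n^*(\F_q)}(g),C_{\gl}(g))$. This yields
\[
  |G|\,\rchi_r(n,q)=\sum_{\substack{g\in G\\ \GCD(q,|g|)=1}}
  |C_G(g)|\,\rchi_{r-1}\big(C_{\Li_n^*(\F_q)}(g),C_{\gl}(g)\big).
\]

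Finally I would pass from elements to conjugacy classes. Conjugation by $h\in G$ carries the pair $(C_{\Li_n^*(\F_q)}(g),C_G(g))$ isomorphically onto $(C_{\Li_n^*(\F_q)}(hgh^{-1}),C_G(hgh^{-1}))$, so $\rchi_{r-1}$ is constant on conjugacy classes; as the class of $g$ has $|G|/|C_G(g)|$ elements, each class $[g]$ contributes $|G|\,\rchi_{r-1}(C_{\Li_n^*(\F_q)}(g),C_{\gl}(g))$ to the sum. Dividing by $|G|$ gives the asserted identity. There is no serious obstacle here: the entire content lies in the first-generator splitting together with Lemma~\ref{lemma:preg}, and the only thing requiring care is the bookkeeping in the last two reductions — checking that the inner sum is literally the defining sum for $\rchi_{r-1}$, and that the element-to-class passage contributes exactly the compensating factor $|[g]|\cdot|C_G(g)|=|G|$.
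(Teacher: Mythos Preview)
Your proof is correct and follows essentially the same route as the paper: peel off the first $\Z$-factor, recognize the inner sum as $|C_G(g)|\,\rchi_{r-1}(C_{\Li_n^*(\F_q)}(g),C_G(g))$, pass to conjugacy classes, and invoke Lemma~\ref{lemma:preg} to discard the non-semisimple terms. Your write-up is in fact more explicit than the paper's---you spell out the tautology $C_{C_{\Li_n^*(\F_q)}(g)}(Y(\Z^{r-1}))=C_{\Li_n^*(\F_q)}(\langle g,Y(\Z^{r-1})\rangle)$ and apply Lemma~\ref{lemma:preg} at the level of the individual summands (where it applies directly to the abelian group $\langle g,Y(\Z^{r-1})\rangle$), whereas the paper compresses these steps into the phrase ``for general reasons'' and a reference to the lemma at the end.
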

\begin{proof}
  If $G=\GL n{\F_q}$, $n \geq 2$, and
$\Pi = \Li_n^*(\F_q)$,  only the  conjugacy classes of order prime to $q$ contribute
to the sum of Lemma~\ref{lemma:genrecur} 
according to \eqref{eq:CGAcontract} and Lemma~\ref{lemma:preg}. The
corollary remains true for $n=1$ where ${\Li}_1^*(\F_q) = \emptyset$.
\end{proof}

\section{Proofs of Theorem~\ref{thm:chirgl} and Corollary~\ref{thm:Frecur}}
\label{sec:semi-simple-elements}


Let $F(x,q) = 1 + \sum_{n \geq 1} a(n,q)x^n \in 1+(x) \subseteq \Z[q][[x]]$ be a \pol\ power series with
constant term $1$ and 
$A(q)=(A_1(q),A_2(q),\ldots,A_d(q),\ldots)$ a sequence of integers
defined for every prime power $q$.
\begin{defn}\label{defn:TA}
  The $A(q)$-transform of the power series  $F(x,q)$ is the power series
  \begin{equation*}
    T_{A(q)}(F(x,q)) = \prod_{d \geq 1} F(x^d,q^d)^{A_d(q)}
  \end{equation*}
\end{defn}

 Note that $T_{A(q)} \colon 1+(x) \to 1+(x)$ is multiplicative in the sense that
 \begin{equation}\label{eq:mult}
 T_{A(q)}(1) = 1, \qquad T_{A(q)}(F_1(x,q)F_2(x,q)) = T_{A(q)}(F_1(x,q)) T_{A(q)}(F_2(x,q))
\end{equation}
for any two polynomial power series $F_1(x,q),F_2(x,q) \in 1+(x)$.

The
$\mathrm{IM}(q)$-transform will be especially important.
(See Section~\ref{sec:intro} for the definition of
$\IM dq$.)
Finite field theory tells us that
\cite[Corollary 3.21, Theorem 3.25]{lidlnieder97}
\begin{equation}\label{eq:A}
 q^n-1 = \sum_{d \mid n} d\IM dq, \qquad
  n\IM nq =  \sum_{d \mid n} \mu(n/d) (q^d-1)
\end{equation}
It is a little easier to calculate the transform with respect to the
sequence $\overline{\mathrm{IM}}(q)$ where
$\overline{\mathrm{IM}}_n(q)$ is the number of {\em all\/} Irreducible
Monic \pol s $f \in \F_q[t]$ of degree $n \geq 1$. As the two sequences agree
except in degree $1$ where $\IM 1q=q-1$ and
$\overline{\mathrm{IM}}_1(q)=q$, the two transforms,
\begin{equation}\label{eq:IMbarIM}
  F(x,q) T_{\mathrm{IM}(q)}F(x,q) = T_{\overline{\mathrm{IM}}(q)}F(x,q) 
\end{equation}
are closely related.

\begin{lemma}\label{lemma:TbarIM}
  $T_{\overline{\mathrm{IM}}(q)}((1-q^ix)^j) = (1-q^{i+1}x)^j$ and
  $T_{\mathrm{IM}(q)}((1-q^ix)^j) = \big(\frac{1-q^{i+1}x}{1-q^ix}\big)^j$ 
for any two integers $i$ and $j$.
\end{lemma}
\begin{proof}
  It is immediate that
  \begin{equation}\label{eq:classical}
    T_{\overline{\mathrm{IM}}(q)}\left( \frac{1}{1-q^ix} \right) =\prod_{d \geq 1} \frac{1}{(1-(q^ix)^d)^{\overline{\mathrm{IM}}_d(q)}}
    = \frac{1}{1-q^{i+1}x}  
 \end{equation}
because the logarithm of the middle term is $\sum_{n \geq 1}
q^n \frac{(q^ix)^n}{n}= \sum_{n \geq 1} \frac{(q^{i+1}x)^n}{n} =   -\log(1-q^{i+1}x)$
by the well-known Lemma~\ref{lemma:varEulertrans} below and \eqref{eq:A}.
The multiplicative property  \eqref{eq:mult} and \eqref{eq:IMbarIM} now imply the result
(cf.\ \cite[Chp 2]{rosen2002}).
\end{proof}

\begin{lemma}\label{lemma:varEulertrans}
  Let $(a_n)_{n \geq 1}$, $(b_n)_{n \geq 1}$, and $(c_n)_{n \geq 1}$ be integer sequences such that
  \begin{equation*}
    \prod_{n \geq 1}(1-x^n)^{-b_n} =
    \exp\big( \sum_{n \geq 1} a_n \frac{x^n}{n}\big) =
    1 + \sum_{n  \geq 1} c_n x^n
  \end{equation*}
  Then
  \begin{equation*}
    a_n = \sum_{d \mid n} db_d, \qquad
    nb_n = \sum_{d \mid n} \mu(n/d)a_d, \qquad
   nc_n = \sum_{1 \leq j \leq n} a_jc_{n-j} 
  \end{equation*}
  where $\mu$ is the number theoretic M\"obius function \cite[Chp 2,
  \S2]{irelandrosen90} and it is understood that $c_0=1$.
\end{lemma}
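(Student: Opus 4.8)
The three asserted formulas each come from comparing a different pair of the three displayed expressions, so the plan is to treat them in turn by standard formal power series manipulations in $\Q[[x]]$ (all constant terms are $1$, so the logarithm is defined throughout).

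For the first formula, $a_n = \sum_{d\mid n} d\,b_d$, I would equate the product and exponential forms and take logarithms. Using $-\log(1-x^n) = \sum_{k\ge1} x^{nk}/k$, the logarithm of $\prod_{n\ge1}(1-x^n)^{-b_n}$ becomes $\sum_{n,k\ge1}\frac{b_n}{k}x^{nk}$. Collecting the coefficient of $x^m$ requires $nk=m$, hence $n\mid m$ and $k=m/n$, which gives coefficient $\frac{1}{m}\sum_{n\mid m} n\,b_n$. Matching this against the coefficient $a_m/m$ coming from $\sum_{m\ge1} a_m x^m/m$ yields the claim. The only thing to watch here is the reindexing of the double sum by divisors.

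The second formula, $n\,b_n = \sum_{d\mid n}\mu(n/d)a_d$, is then just \Mb\ inversion of the first: setting $B_d = d\,b_d$ rewrites the first formula as $a_n = \sum_{d\mid n}B_d$, whose \Mb\ inverse is $B_n = \sum_{d\mid n}\mu(n/d)a_d$, which is exactly the assertion.

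For the third formula I would use the remaining equality $\exp(\sum_{n\ge1} a_n x^n/n) = F(x)$ with $F(x) = 1+\sum_{n\ge1} c_n x^n$, and apply logarithmic differentiation. From $\log F(x) = \sum_{n\ge1} a_n x^n/n$ one gets $F'(x) = F(x)\sum_{n\ge1} a_n x^{n-1}$; comparing the coefficient of $x^{n-1}$ on both sides, with the convention $c_0=1$, produces $n\,c_n = \sum_{1\le j\le n} a_j c_{n-j}$. I expect no genuine obstacle: every step is a routine identity of formal power series, and the only bookkeeping requiring a little care is the divisor reindexing in the first step.
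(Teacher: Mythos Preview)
Your proposal is correct and essentially identical to the paper's proof: the paper obtains the first identity by applying $x\frac{d}{dx}\log$ to the product--exponential equality (which amounts to your taking logarithms and comparing coefficients, multiplied through by $m$), deduces the second by \Mb\ inversion, and gets the third by applying $x\frac{d}{dx}$ to the exponential--series equality (your logarithmic differentiation, shifted by one power of $x$). There is nothing to add.
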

\begin{proof}
  The first identity follows from 
  \begin{equation*}
    \sum_{n \geq 1} a_nx^n = \sum_{n \geq 1} \sum_{k \geq 1} nb_nx^{nk} 
  \end{equation*}
  obtained by applying the operator $x\D{}x\log$
    to the given identity
  $\exp( \sum_{n \geq 1} a_n \frac{x^n}{n}) = \prod_{n \geq
    1}(1-x^n)^{-b_n}$. M\"obius inversion leads to the second
  identity. The third identity follows from
  \begin{equation*}
    \big( 1 + \sum_{n  \geq 1} c_n x^n \big) \big( \sum_{n \geq 1} a_n
    x^n \big) =
    \sum_{n \geq 1} nc_nx^n
  \end{equation*}
  obtained by applying the operator $x\frac{\mathop{d}}{\mathop{dx}}$
  to the given identity
  $\exp\big( \sum_{n \geq 1} a_n \frac{x^n}{n}\big) =
    1 + \sum_{n  \geq 1} c_n x^n$.
\end{proof}

Since an element of $\gl$ is semi-simple if and only its order is
prime to $q$ \cite[\S2]{thevenaz92poly},
it is precisely the semi-simple elements that contribute terms to the
right side in Corollary~\ref{cor:recur}.

An element $g \in \GL n{\F_q}$ is semi-simple if and only the $\F_q[t]$-module $V_n(\F_q)$ with $tv=gv$ has the form
\begin{equation*}
  V_n(\F_q) \cong \bigoplus_f \underbrace{\F_q[t]/(f(t)) \oplus \cdots \oplus \F_q[t]/(f(t))}_{m_g(f)}
\end{equation*}
where the direct sum is over irreducible monic polynomials $f \in \F_q[t]$, $f(t) \neq t$, and the $m_g(f) \geq 0$ are natural numbers. (The irreducible polynomial $f(t)=t$ of degree $1$ is excluded since we need $t$ to act as an auto\m\ on $\F_q[t]/(f(t))$.)
The Galois field $\F_q[t]/(f(t))$ has $q^{d(f)}$ elements where $d(f)$ is the degree of $f$.
Thus there is a bijective correspondence
\begin{equation*}
  \prod_f f(t)^{m(f)} \longleftrightarrow \bigoplus_f \left( \F_q[t]/(f(t)) \right)^{m(f)}
\end{equation*}
between the $q^{n-1}(q-1)$ monic \pol s in $\F_q[t]$ of degree $n$ with nonzero
constant term and the
semi-simple classes in $\GL n{\F_q}$. Here, the $f$s are  monic
irreducible \pol s with $f(0) \neq 0$, $m(f) \geq 0$ and $\sum_f d(f)m(f)=n$.

In the notation of \cite[\S1, \S2]{green55}, $g$ is semi-simple if and
only if all parts of
the associated partitions $\nu_g(f)$ are $1$ or $0$. If
$\nu(f)=(1,\ldots,1,0,\ldots)$ partitions $m(f)$,
the matrix $U_{\nu(f)}(t)$ is the zero $(m(f) \times m(f))$-matrix and
its module $V_{U_{\nu(f)}(t)}(q^{d(f)})$ is $V_{m(f)}(\F_{q^{d(f)}})$
as a $\F_{q^{d(f)}}$-vector space.
The lattice of subspaces invariant under $g$  in $V_n(\F_q)$ and the centralizer of $g$ in $\GL n{\F_q}$ are
  \begin{equation*}
   C_{\Li_n(\F_q)}(g) = \prod_f \Li_{m_g(f)}(\F_{q^{d(f)}}), \qquad 
   C_{\gl}(g) = \prod_f \GL{m_g(f)}{\F_{q^{d(f)}}}
\end{equation*}
according to \cite[Lemma 2.1]{green55}.
 The semi-simple conjugacy class $[g]$ contributes
\begin{equation*}
  \rchi_{r-1}(C_{\Li_n^*(\F_q)}(g), C_{\gl}(g)) = \prod_{f \mid c_g} \rchi_{r-1}(m_g(f),q^{d(f)}). \qquad
  c_g(t) = \prod_f f(t)^{m_g(f)}
\end{equation*}
to the sum of Corollary~\ref{cor:recur}. The product runs over the irreducible monic factors $f$ of $c_g$, the characteristic \pol\ of $g$.

It is now immediate from an extended version of the Product formula
\cite[Theorem 8.5]{bona2006} for generating functions that
Corollary~\ref{cor:recur} translates to the recurrence relation
\begin{equation}\label{eq:recurFGL}
  F_{r+1}(x,q) = T_{\mathrm{IM}(q)}F_r(x,q), \qquad r \geq 1
\end{equation}
for the generating functions \eqref{eq:genfct}. The base function is $F_1(x,q) = 1-x$ 
(Proposition~\ref{prop:neq1}.\eqref{prop:neq11}).

\begin{proof}[Proof of Theorem~\ref{thm:chirgl}]
  The sequence $F_{r+1}(x,q)$, $r \geq 0$, of Theorem~\ref{thm:chirgl}
  solves recurrence \eqref{eq:recurFGL} since
  \begin{equation*}
  T_{\mathrm{IM}(q)} \prod_{0 \leq j \leq r} (1-q^jx)^{(-1)^{r-j}\binom{r}{j}}  =
  \frac{\prod_{0 \leq j \leq r} (1-q^{j+1}x)^{(-1)^{r-j}\binom{r}{j}}}{\prod_{0 \leq j \leq r} (1-q^jx)^{(-1)^{r-j}\binom{r}{j}} }
  = \prod_{0 \leq j \leq r+1} (1-q^jx)^{(-1)^{r+1-j}\binom{r+1}{j}} 
\end{equation*}
for all $r \geq 0$
by Lemma~\ref{lemma:TbarIM}.
\end{proof}

See \cite[Proposition~4.1]{thevenaz92poly} for 
the case $r=2$ where $F_2(x,q) = \frac{1-qx}{1-x}$.  As $F_1(x,q)=1-x$
and $F_2(x,q) = \frac{F_1(x,qx)}{F_1(x,q)}$,
  \begin{equation}\label{eq:oldrecur}
    F_{r+1}(x,q) = T_{\IM{}q}F_{r}(x,q) = \frac{T_{\IM{}q}F_{r-1}(x,qx)}{T_{\IM{}q}F_{r-1}(x,q)} =
    \frac{F_{r}(x,qx)}{F_{r}(x,q)}, \qquad r \geq 2
  \end{equation}
  by induction.  This observation can be used to give another proof
  of Theorem~\ref{thm:chirgl}.

\begin{proof}[Proof of Corollary~\ref{thm:Frecur}]
The logarithm of the $(r+1)$th generating function $F_{r+1}(x,q)$  is
\begin{multline*}
  \log F_{r+1}(x,q) = \sum_{0 \leq j \leq r} (-1)^{r-j} \binom rj \log(1-q^jx) 
  = \sum_{0 \leq j \leq r} (-1)^{r-j} \binom rj \sum_{n \geq 1} -\frac{q^{nj}}{n} x^n \\
  =-\sum_{n \geq 1} \frac{x^n}{n} \sum_{0 \leq j \leq r} \binom rj (-1)^{r-j}q^{nj}
  =-\sum_{n \geq 1} (q^n-1)^r\frac{x^n}{n} 
\end{multline*}
The corollary follows.
  \end{proof}

\begin{figure}[t]
  \centering
  \begin{tabular}[t]{>{$}c<{$}|*{10}{>{$}c<{$}}}
  -\chi_r(n,2) & n=1 & n=2 & n=3 & n=4 & n=5 & n=6  & n=7 & n=8 & n=9
    & n=10\\ \hline
  r=1 &   1  &                0  &                0  &
                                                       0  &
                                                            0  &
                                                                 0 & 0
    & 0 &0 & 0\\
r= 2 &        1  &               1  &               1  &               1
                                       &               1  &
                                                            1 & 1 &1
                                                                &1 &1 \\
r= 3 &        1  &               4  &              12  &              32
                                       &              80  &
                                                            192 & 448
                                                                  &
                                                                    1024
                                                                &
                                                                  2304
                                                                & 5120 \\
 r=4 &        1  &              13  &             101  &             645
                                       &            3717  &
                                                            20101 &
                                                                    104069
                                                          & 521861 &
                                                                     2553477
    & 12252805 \\
 r=5 &        1  &              40  &             760  &           11056
                                       &          140848  &
                                                            1657216
     &18480640&198188800&2062546176&20957358080
 \end{tabular}
  \caption{Equivariant reduced \Euc s of the  $\GL
    n{\F_2}$-poset $\Li^*_n(\F_2)$}
  \label{fig:rchigln2}
\end{figure}



 We now write down explicitly the coefficient of $x^n$ in the power series
  $F_{r+1}(x,q)$ of Theorem~\ref{thm:chirgl} (Corollary~\ref{cor:rchirnq}) and 
apply Lemma~\ref{lemma:varEulertrans} to the power series of
   Corollary~\ref{thm:Frecur} (Corollary~\ref{cor:rchirecur}).

\begin{cor}\label{cor:rchirnq}  
  The $(r+1)$th, $r \geq 0$, equivariant reduced \Euc\ of the $\GL n{\F_q}$-poset ${\Li}_n^*(\F_q)$  is
  \begin{equation*}
    \rchi_{r+1}(n,q) = (-1)^n\sum_{n_0+\cdots+n_r=n} \prod_{0 \leq j \leq r}
    \binom{(-1)^{r-j}\binom{r}{j}}{n_j}q^{jn_j}
  \end{equation*}
  where the sum ranges over all $\binom{n+r}{n}$ weak compositions of
  $n$ into $r+1$ parts \cite[p 15]{stanley97}.
\end{cor}

  \begin{cor}\label{cor:rchirecur}  
    The $(r+1)$th, $r \geq 0$, equivariant reduced \Euc s satisfy the recursion
    \begin{equation*}
      \rchi_{r+1}(n,q) =
      \begin{cases}
        1 & n=0 \\
        \displaystyle -\frac{1}{n}\sum_{1 \leq j \leq n} (q^j-1)^{r} \rchi_{r+1}(n-j,q) & n>0
      \end{cases}
    \end{equation*}
  \end{cor}

For example, $\rchi_1(n,q) = -\delta_{1,n}$, $\rchi_2(n,q) = -(q-1)$,
$\rchi_3(n,q) = -n(q-1)^2q^{n-1}$ and
\begin{equation*}
  \rchi_4(n,q) = -(q-1)^3\big( 1+ \sum_{2 \leq j \leq 2n-2}
  (-1)^jd(j)q^j\big) \qquad\qquad
  d(j) =
  \begin{cases}
    \binom{(j+1)/2}{2} & 2 \nmid j \\ \binom{j/2+2}{2} & 2 \mid j
    \end{cases}
\end{equation*}
for all $n \geq 1$ (with the understanding that $\rchi_4(1,q)=-(q-1)^3$).

\subsection{Alternative presentations of the equivariant reduced \Euc s}
\label{sec:altern-pres-equiv}
  One may equally well represent the equivariant reduced \Euc s
  $\rchi_{r+1}(n,q)$
  by the generating functions
\begin{equation}\label{eq:GSp}
  \GGL{}n(x,q) =  \sum_{r \geq 0} \rchi_{r+1}(n,q) x^r  = -\delta_{1,n}
  + \rchi_2(n,q)x + \rchi_3(n,q)x^2 + \cdots, \qquad n \geq 0
\end{equation}
where the parameter $n$ is fixed rather than $r$ as in
$F_r(x,q)$ \eqref{eq:genfct}. Declaring $\rchi_{r+1}(0,q)$ to be $1$
for all $r \geq 0$, we have  $\GGL{}0(x,q) =
x +x^2 + \cdots = \frac{x}{1-x}$. 
  
  The solution to the recursion of Corollary~\ref{cor:rchirecur}
  involves integer partitions and the  following terminology.
  A multiset $\lambda$ is a base set $B(\lambda)$ with a
  multiplicity function $E(\lambda,b)$ defined for all
  $b \in B(\lambda)$. Representing the multiset as
  $\lambda=\{b^{E(\lambda,b)} \mid b \in B(\lambda) \}$, we let
\begin{alignat*}{3}
  &|\lambda| = \sum_{b \in B(\lambda)} E(\lambda,b)&&\qquad
  &&n(\lambda)=\sum_{b \in B(\lambda)} bE(\lambda,b) \\
  &T(\lambda) = \frac{n(\lambda)!}{\prod_{b \in B(\lambda)} E(\lambda,b)!
    b^{E(\lambda,b)}} &&\qquad
  &&U(\lambda,q) 
  = \prod_{b \in B(\lambda)} (q^b-1)^{E(\lambda,b)}  = \prod_{b \in \lambda} (q^b-1) 
\end{alignat*}
so that $|\lambda|$ is the cardinality (number of parts) of $\lambda$, $\lambda$
partitions $n$, $\lambda \vdash n$, if $n(\lambda)=n$, and
$T(\lambda)$ is the number of elements in the symmetric group
$\Sigma_{n(\lambda)}$ having cycle type $\lambda$ \cite[Proposition
1.1.1]{sagan:symmetric}.


\begin{prop}\label{cor:GGL}  
     For $r \geq 0$ and $n \geq 1$,
    \begin{equation*}
      \rchi_{r+1}(n,q)=
    \frac{1}{n!} 
    \sum_{ \lambda \vdash n} (-1)^{|\lambda|} T(\lambda)
    U(\lambda,q)^r \qquad
      \GGL{}n(x,q) = \frac{1}{n!}
  \sum_{\lambda \vdash n}  (-1)^{|\lambda|} \frac{ T(\lambda)}{1-x
    U(\lambda,q)}  
\end{equation*}
\end{prop}
 \begin{proof}
   The sequence $(\rchi_{r+1}(n,q))_{n \geq 1}$ as
   defined in the proposition solves
   the recursion of Corollary~\ref{cor:rchirecur}.
 \end{proof}

Examples of
Proposition~\ref{cor:GGL} are
$1!\GGL{}1(x,q) = -\frac{1}{1-x(q-1)}$, $2!\GGL{}2(x,q) =
-\frac{1}{1-x(q^2-1)} + \frac{1}{1-x(q-1)^2}$ and
\begin{equation*}
  3!\GGL{}3(x,q) = -\frac{2}{1-x(q^3-1)} + \frac{3}{1-x(q^2-1)(q-1)} - \frac{1}{1-x(q-1)^3}\\
\end{equation*}
\begin{cor}\label{cor:qm1}
  The \pol\ $\rchi_{r+1}(n,q) \in \Z[q]$,  $r \geq 0$, $n \geq 1$,  is divisible by $(q-1)^r$ 
  and also by $q^{n-1}$ when $r$ is even. The sum of the coefficients
  in the quotient \pol\ $-\rchi_{r+1}(n,q)/(q-1)^r$ is $n^{r-1}$.
  \end{cor}
 \begin{proof}
  Since $U(\lambda,q)$ is divisible by $(q-1)^{|\lambda|}$ for all 
  $\lambda \vdash n$, Proposition~\ref{cor:GGL} implies $(q-1)^r
  \mid \rchi_{r+1}(n,q)$.

  For even $r$, the weak partitions $(n_0,n_1,\ldots,n_r)$ of $n$ with
  $n_0>1$ contribute $0$ to the sum in Corollary~\ref{cor:rchirnq}
  since
  \begin{equation*}
    \binom{(-1)^r\binom{r}{0}}{n_0} = \binom{1}{n_0} =0, \qquad n_0>1
  \end{equation*}
  The remaining weak partitions with $n_0 \leq 1$ contribute $0$ or a \pol\ of
  degree
  $\sum jn_j = n_1+2n_2+\cdots+rn_r \geq n_1+n_2+\cdots+n_r \geq
  n-1$. Thus $q^{n-1} \mid \rchi_{r+1}(n,q)$.

  The quotient $U(\lambda,q)/(q-1)$ evaluates to $0$ at $q=1$ unless
  $\lambda = \{n^1\}$ where the evaluation is $n$. Thus
  $\rchi_{r+1}(n,q)/(q-1)^r$ at $q=1$ is $-\frac{1}{n!}T(\{n^1\})n^r =
  -\frac{1}{n!} \frac{n!}{n} n^r = -n^{r-1}$.
\end{proof}

\begin{rmk}  
  Let $\overline\chi_{r+1}(n,q)$ denote the quotient \pol\ $-\rchi_{r+1}(n,q)/D$ where
  $D=(q-1)^r$ for odd $r$ and $D=q^{n-1}(q-1)^r$ for even $r$. Then 
 $\overline\chi_{2}(n,q)=1$ and $\overline\chi_{3}(n,q)=n$.
The \pol\
$\overline\chi_7(2,q)= 6q^4+20q^2+6=2(3q^2+1)(q^2+3)$ is
reducible but 
I do not know of any reducible $\overline\chi_r(n,q) \in \Q[q]$
with $n>2$. 
For example,
\begin{equation*}
  \overline\chi_6(4,q) =
  35q^{12} - 45q^{11} + 150q^{10} - 170q^9 + 290q^8 - 235q^7 + 270q^6 - 100q^5 +
    60q^4 - 10q^3 + 10q^2 + 1
  \end{equation*}
  is irreducible in $\Q[q]$ by Eisenstein's irreducibility
  criterion \cite[p 78]{irelandrosen90}. The coefficient sum is $4^4$.
\end{rmk}


The reciprocal of $F_{r+1}(x,q)$,
\begin{equation*}
  F_{r+1}(x,q)^{-1} = \prod_{0 \leq j \leq r}
  (1-q^jx)^{(-1)^{r+1-j}\binom{r}{j}} =
  \exp\big( \sum_{n \geq 1} (q^n-1)^{r}\frac{x^n}{n} \big) =
  \prod_{k \geq 1} (1-x^k)^{-a_{r+1}(k,q)}
\end{equation*}
satisfies by \eqref{eq:mult} the recursion
$F_{r+1}(x,q)^{-1} = T_{\IM{}q} F_{r}(x,q)^{-1} =
T_{-\IM{}q}F_r(x,q)$.  Let $\rchi^{-1}_{r+1}(n,q) \in \Z[q]$ denote
the coefficient of $x^n$ in $F_{r+1}(x,q)^{-1}$.  In particular,
$\rchi_2^{-1}(n,q)$ is, by construction, the number of semi-simple
classes in $\GL n{\F_q}$. We have, as above,
\begin{gather}  
    1 + \sum_{n \geq 1} \rchi^{-1}_{r+1}(n,q) x^n = F_{r+1}(x,q)^{-1}   \qquad
    1 + \sum_{r \geq 1} \rchi^{-1}_{r+1}(n,q) x^r = \frac{1}{n!}
    \sum_{\lambda \vdash n} \frac{T(\lambda)}{1-xU(\lambda,q)} \\
    \rchi^{-1}_{r+1}(n,q)   = \frac{1}{n!}\sum_{ \lambda \vdash n}  T(\lambda)
    U(\lambda,q)^r 
    = (-1)^n\sum_{n_0+\cdots+n_r=n} \prod_{0 \leq j \leq r}
    \binom{(-1)^{r+1-j}\binom{r}{j}}{n_j}q^{jn_j} \label{eq:rchiinv}
  \end{gather}  
 Special cases are $\rchi^{-1}_1(n,q) =1$,
  $\rchi^{-1}_2(n,q) = q^n-q^{n-1} $,
  $\rchi^{-1}_3(n,q) = \frac{q-1}{q+1}(q^{2n}-1)$ and
\begin{equation*}
  \rchi^{-1}_4(n,q) = (q-1)^3q^{n-1}\big(q^{2n-2}+\sum_{2\leq j \leq 2n-2}
  (-1)^jd(j)q^{2n-2-j}\big) \qquad
   d(j) =
  \begin{cases}
    \binom{(j+1)/2}{2} & 2 \nmid j \\ \binom{j/2+2}{2} & 2 \mid j
  \end{cases}
\end{equation*}
with the understanding that $\rchi^{-1}_4(1,q)=(q-1)^3$. For
$n=1,2,3$,  $r>0$, 
$1!\rchi^{\pm 1}_{r+1}(1,q) = \mp (q-1)^r$, $2!\rchi^{\pm 1}_{r+1}(2,q) =
\mp (q^2-1)^r + (q-1)^{2r}$ and $3!\rchi^{\pm 1}_{r+1}(3,q) = \mp
2(q^3-1)^r + 3(q^2-1)^r(q-1)^r \mp (q-1)^{3r}$ by
Proposition~\ref{cor:GGL} and \eqref{eq:rchiinv}.

\subsection{Polynomial identities for partitions}
\label{sec:polyn-ident-part}
The  \pol\ identities \cite[Theorem A,
B]{thevenaz92poly} are parts of a greater hierarchy of \pol\ identities.

Let $M_n$, $n \geq 1$, be the set of all finite multisets
$\lambda = \{(m_1,d_1)^{e(m_1,d_1)}, \cdots, (m_s,d_s)^{e(m_s,d_s)}\}$
of pairs of natural numbers $(m_i,d_i)$ with multiplicities
$e(m_i,d_i)$ such that the multiset
$\{(m_1d_1)^{e(m_1,d_1)}, \cdots, (m_sd_s)^{e(m_s,d_s)}\}$ is a
partition of $n$. The coefficient of $x^n$ in the $A(q)$-transform
(Definition~\ref{defn:TA}) $T_{A(q)}F(x,q)$ of
$F(x,q)= 1 + \sum_{n \geq 1} a(n,q)x^n$ is
\begin{equation*}
   \sum_{\lambda \in M_n}
   \prod_{\{ d \mid \exists m \colon
     (m,d) \in B(\lambda) \}}
  \binom{A_d(q)}{[E(\lambda,(m,d)) \mid  (m,d) \in B(\lambda)]}
   \prod_{\{m \mid  (m,d) \in B(\lambda)\}}a(m,q^d)^{E(\lambda,e(m,d))}
\end{equation*}
where
\begin{itemize}
\item the first  product extends over the set of all 
  second coordinates of the  multiset $\lambda$
\item $[E(\lambda,(m,d)) \mid (m,d) \in B(\lambda)]$
  is the multiset of multiplicities of elements of $\lambda$ with
  $d$ as second coordinate
\item the multinomial coefficient
  \begin{equation*}
  \binom{n}{k_1,\ldots,k_s} =\frac{n(n-1) \cdots (n+1-\sum
      k_i)}{k_1!k_2! \cdots k_s!}    
  \end{equation*}
 \end{itemize}
For instance, the multiset $\{(1,1)^2,(2,1)^2,(1,2)^2\}$ from
$M_{10}$ contributes the term
\begin{equation*}
  \binom{A_1(q)}{2,2} a(1,q)^2 a(2,q)^2 \binom{A_2(q)}{2}a(2,q^2)^2 
\end{equation*}
to the sum over all the $244$ multisets in $M_{10}$.

The ordinary generating function for the number of elements in $M_n$ is
\begin{equation*}
  1+\sum_{n \geq 1} |M_n|x^n =
  \prod_{k \geq 1} (1-x^k)^{-\tau(k)}
\end{equation*}
where $\tau(k)$ is the number of divisors of $k$. The first terms
are $|M_n|=1, 3, 5, 11, 17, 34, 52, 94, 145,244,\ldots$.

Proposition~\ref{cor:GGL}, \eqref{eq:rchiinv} and the recursive relations
$F_{r+1}(x,q)^{\pm 1} = T_{\IM{}q}
F_r(x,q)^{\pm 1} = T_{-\IM{}q}
F_r(x,q)^{\mp 1}$ give a sequence of \pol\ identities
\begin{equation*}
  \rchi_{r+1}^{-\varepsilon_1}(n,q) =
  \sum_{\lambda \in M_n}
   \prod_{\{ d \mid \exists m \colon
     (m,d) \in B(\lambda) \}}
  \binom{ \varepsilon_1 \varepsilon_2 \IM dq}{[E(\lambda,(m,d)) \mid  (m,d) \in B(\lambda)]}
   \prod_{\{m \mid (m,d) \in B(\lambda)\}}
   \rchi_r^{-\varepsilon_2}(m,q^d), \qquad r \geq 1
\end{equation*}
where $\rchi_{r+1}^{-\varepsilon_1}(n,q) =
  \frac{1}{n!} \sum_{\lambda \vdash n}
  \varepsilon_1^{|\lambda|}T(\lambda)U(\lambda,q)^{r}$,
 $\rchi_r^{-\varepsilon_2}(m,q^d) = \sum_{\mu \vdash
   m}\varepsilon_2^{|\mu|}T(\mu)U(\mu,q^d)^{r-1}$ for $\varepsilon_1, \varepsilon_2 = \pm 1$.
Taking $r=1$ and $r=2$ we get
 the \pol\ identities 
 \begin{align*}
   &\rchi_2^{-\varepsilon_1}(n,q) =
     \frac{1}{n!}
    \sum_{\lambda \vdash n}
    \varepsilon_1^{|\lambda|}T(\lambda)U(\lambda,q) =
     \begin{cases}
      \displaystyle \sum_{\lambda \vdash n} (-1)^{|\lambda|} \prod_{d \in
    B(\lambda)} \binom{-\varepsilon_1 \IM dq}{E(\lambda,d)} &
  \varepsilon_2=-1\\
  \displaystyle \sum_{\lambda \in M_n} \prod_d \binom{\varepsilon_1\IM
                                                                 dq}{[E(\lambda,(m,d))
                                                                 \mid
                                                                 (m,d)\in
                                                                 B(\lambda)]}  &
  \varepsilon_2=+1
                                                             \end{cases}
   \\
   &\rchi_3^{-\varepsilon_1}(n,q) =
\frac{1}{n!}
    \sum_{\lambda \vdash n}
    \varepsilon_1^{|\lambda|}T(\lambda)U(\lambda,q)^2 =
     \begin{cases}
       \displaystyle \sum_{\lambda \in M_n }
   \prod_{d} \binom{-\varepsilon_1\IM dq}{[E(\lambda,(m,d)) \mid (m,d) \in B(\lambda)]}
                                  \prod_{m} (1-q^d)^{E(\lambda,(m,d))}
                                  & \varepsilon_2=-1\\
        \displaystyle \sum_{\lambda \in M_n} \prod_d \binom{\varepsilon_1\IM
     dq}{[E(\lambda,(m,d)) \mid (m,d) \in B(\lambda)]} \prod_m
    (q^{dm}-q^{d(m-1)})^{E(\lambda,(d,m))}  & \varepsilon_2=+1                         
     \end{cases}
 \end{align*}
where we used that $\rchi_1(m,q^d)=-\delta_{1,m}$
(Proposition~\ref{prop:neq1}.\eqref{prop:neq11}) contributes only for
$m=1$,
$\rchi^{-1}_1(m,q^d)=1$,
$\rchi_2(m,q^d)=1-q^d$ and
$\rchi^{-1}_2(m,q^d)=q^{dm}-q^{d(m-1)}$.
The left sides above are
\begin{equation*}
  \rchi_2^{-\varepsilon_1}(n,q)=
  \begin{cases}
    q^n-q^{n-1} & \varepsilon_1=+1 \\ 1-q & \varepsilon_1=-1  
  \end{cases} \qquad
  \rchi_3^{-\varepsilon_1}(n,q) =
  \begin{cases}
    \frac{q-1}{q+1}(q^{2n}-1) & \varepsilon_1=+1 \\ -nq^{n-1}(q-1)^2 & \varepsilon_1=-1  
  \end{cases}
\end{equation*}
The \pol\ identities \cite[Theorem A, B]{thevenaz92poly} are the
identities at $r=1$ for $\rchi_2^{\pm}(n,q)$. The \pol\ identities for
$r>1$ and the identities involving 
Corollary~\ref{cor:GGL} seem to be new.

Specializing further to $n=3$, the
index set
\begin{equation*}
M_3=\{ \{(3,1)\}, \{(1,3)\}, \{(1,1),(1,2)\}, \{(1,1),(2,1)\},
\{(1,1)^3\} \}
\end{equation*}
contains $5$ multisets and the above identities for $\rchi_2^{\pm \varepsilon_1}(3,q)$ are
\begin{align*}  
      \rchi_2^{-1}(3,q) &= q^3-q^2 = \frac{1}{6}(2(q^3-1)+3(q^2-1)(q-1)+(q-1)^3) 
                                      \\
              &\mathrel{\substack{\varepsilon_2=-1 \\ = \\ \text{A}}}
  -\binom{-\IM 3q}{1} + \binom{-\IM 2q}{1}\binom{-\IM 1q}{1} -
          \binom{-\IM 1q}{3}
  \\ &\stackrel{\varepsilon_2=+1}{=}
      \binom{\IM 3q}{1} + \binom{\IM 1q}{1} + \binom{\IM 1q}{1}
      \binom{\IM 2q}{1} +
       \binom{\IM 1q}{1,1} +\binom{\IM 1q}{3}  \\
   \rchi_2(3,q) &= 1-q = \frac{1}{6}(-2(q^3-1)+3(q^2-1)(q-1)-(q-1)^3) 
  \\ &\mathrel{\substack{\varepsilon_2=-1 \\ = \\ \text{B}}}
  -\binom{\IM 3q}{1} + \binom{\IM 2q}{1}\binom{\IM 1q}{1} -
             \binom{\IM 1q}{3} 
  \\ &\stackrel{\varepsilon_2=+1}{=}
      \binom{-\IM 3q}{1} + \binom{-\IM 1q}{1} + \binom{-\IM 1q}{1}
      \binom{-\IM 2q}{1} +
                \binom{-\IM 1q}{1,1} +\binom{-\IM 1q}{3}   
\end{align*}
while for $\rchi_3^{\pm \varepsilon_1}(3,q)$ they are
\begin{align*}
  \rchi_3^{-1}(3,q) &=
  \frac{q-1}{q+1}(q^{6}-1) =
                             \frac{1}{6}(2(q^3-1)^2+3(q^2-1)^2(q-1)^2+(q-1)^6)
 \\&\stackrel{\varepsilon_2=-1}{=}
      \binom{-\IM 3q}{1}(1-q^3) + \binom{-\IM 1q}{1}(1-q) + \binom{-\IM 1q}{1}
      \binom{-\IM 2q}{1}(1-q)(1-q^2) \\ &+
                                          \binom{-\IM 1q}{1,1}(1-q)^2
                                          +\binom{-\IM 1q}{3}(1-q)^3
      \\&\stackrel{\varepsilon_2=+1}{=}
      \binom{\IM 3q}{1}(q^3-1) + \binom{\IM 1q}{1}(q^3-q^2) + \binom{\IM 1q}{1}
      \binom{\IM 2q}{1}(q-1)(q^2-1) \\ &+
                \binom{\IM 1q}{1,1}(q-1)(q^2-q) +\binom{\IM
                                         1q}{3}(q-1)^3\\
      \rchi_3(3,q) &=
     -3(q-1)^2q^2 = \frac{1}{6}(-2(q^3-1)^2+3(q^2-1)^2(q-1)^2-(q-1)^6) 
 \\&\stackrel{\varepsilon_2=-1}{=}
      \binom{\IM 1q}{1}(1-q) + \binom{\IM 3q}{1}(1-q^3) + \binom{\IM 1q}{1}
      \binom{\IM 2q}{1}(1-q)(1-q^2) \\ &+
                                         \binom{\IM 1q}{1,1} (1-q)^2+\binom{\IM 1q}{3}(1-q)^3
                                         \\&\stackrel{\varepsilon_2=+1}{=}
      \binom{-\IM 3q}{1}(q^3-1) + \binom{-\IM 1q}{1}(q^3-q^2) + \binom{-\IM 1q}{1}
      \binom{-\IM 2q}{1}(q-1)(q^2-1) \\ &+
                \binom{-\IM 1q}{1,1}(q-1)(q^2-q) +\binom{-\IM
                                          1q}{3}(q-1)^3
    \end{align*}

 \section{The $p$-primary equivariant reduced \Euc}
\label{sec:p-prim-equiv}


The \emph{$r$th $p$-primary equivariant reduced \Euc\/} of the
$G$-poset $\Pi$ 
is  the normalized sum \cite[(1-5)]{tamanoi2001}
\begin{equation}\label{defn:pprimrchiPiG}
  \rchi_r(\Pi,G,p) = \frac{1}{|G|} \sum_{X \in \Hom{\Z \times \Z_p^{r-1}}G}
  \rchi(C_{\Pi}(X(\Z \times \Z_p^{r-1}))
\end{equation}
of the reduced \Euc s of the $X(\Z \times \Z_p^{r-1})$-fixed
$\Pi$-subposets as $X$ ranges over the set of all homo\m s of
$\Z \times \Z_p^{r-1}$ to $G$. When $G$ acts trivially on $\Pi$,
$\rchi_r(\Pi,G,p) = \rchi(\Pi) |\Hom{\Z_p^{r-1}}{G}/G|$ is
proportional to the number of conjugacy classes of commuting
$(r-1)$-tuples of $p$-singular elements of $G$ \cite[Lemma
4.13]{HKR2000}.  (A group element is {\em $p$-singular\/} if its order
is a power of $p$ \cite[Definition 40.2, \S 82.1]{cr}.)  When $p$ does
not divide the order of $G$, there are no nontrivial $p$-singular
elements in $G$ and $\rchi_r(\Pi,G,p) = \rchi_1(\Pi,G)$ does not
depend on $r$. In particular, the
primary equivariant \Euc s 
of the $\GL n{\F_q}$-poset $\Li^*_n(\F_q)$ are
defined as follows.

\begin{defn}\label{defn:pprimeuc}
  The $r$th, $r \geq 1$, $p$-primary equivariant reduced \Euc\ of the $\gl$-poset
  $\Li_n^*(\F_q)$ is the normalized sum
\begin{equation*}
  \rchi_r(\Li_n^*(\F_q), \GL n{\F_q},p) =
  \frac{1}{|\gl|}
  \sum_{X \in \Hom{\Z \times
      \Z_p^{r-1}}{\gl}}\rchi(C_{\Li_n^*(q)}(X(\Z \times \Z_p^{r-1})))
\end{equation*}
of reduced \Euc s.
\end{defn}

In this section we calculate the $p$-primary generating functions
$F_r(x,q,p) = 1 + \sum_{n \geq 1}\rchi_r(n,q,p)x^n$
\eqref{eq:primgenfct} for the $p$-primary equivariant reduced \Euc s,
$\rchi_r(n,q,p) =\rchi_r(\Li_n^*(\F_q), \GL n{\F_q},p)$.

\begin{prop}\label{prop:pprimr1n1}
  Suppose that $r=1$ or $n=1$.
  \begin{enumerate}
  \item When $r=1$, $\rchi_1(n,q,p) = \rchi_1(n,q) = -\delta_{1,n}$ is $-1$ for
    $n=1$ and $0$ for $n>1$. \label{prop:pprimr1n11}
  \item When $n=1$, $\rchi_r(1,q,p) = -(q-1)^{r-1}_p$ for all $p$,
    $q$, and $r \geq 1$. \label{prop:pprimr1n1item2}
  \end{enumerate}
\end{prop}
\begin{proof}
   When $r=1$, the $p$-primary equivariant reduced \Euc\ and the
   equivariant reduced \Euc\ agree by Definition~\ref{defn:pprimeuc}
   and we refer to Proposition~\ref{prop:neq1}.\eqref{prop:neq11}.
  When $n=1$, 
  \begin{equation*}
    \rchi_r(1,q,p) =-|\Hom{\Z \times \Z^{r-1}_p}{\GL 1{\F_q}}|/|\GL 1{\F_q}| =
    -(q-1)(q-1)_p^{r-1}/(q-1) = -(q-1)_p^{r-1} 
  \end{equation*}
  since $\Li^*_1(\F_q)=\emptyset$ and $\rchi(\emptyset)=-1$.
\end{proof}

According to Proposition~\ref{prop:pprimr1n1}.\eqref{prop:pprimr1n11},
the first $p$-primary generating function $F_1(x,q,p)=1-x$ is
independent of $p$ and $q$. In fact, $F_r(x,q,p)=1-x$ for all
$r \geq 1$ if $q$ is a power of $p$ by Lemma~\ref{lemma:preg}. The
interesting case is thus when $p \nmid q$ where the first terms in the
$r$th generating function are
$F_r(x,q,p) = 1-(q-1)_p^{r-1}x + \cdots$.

The analogue of Corollary~\ref{cor:recur}, proved exactly as before, asserts that
\begin{equation*}
  \rchi_r(n,q,p) = \sum_{[g] \in [\gl_p]} \rchi_{r-1}(C_{\Li_n^*(\F_q)}(g),C_{\gl}(g),p)
\end{equation*}
where the sum is extended over the set $[\gl_p]$ of $p$-singular
conjugacy classes in $\GL n{\F_q}$.

The {\em order\/} of a \pol\ $f \in \F_q[t]$ with $f(0) \neq 0$ is the
least positive integer $e$ for which $f(t)$ divides $t^e-1$ \cite[Definition 3.2]{lidlnieder97}.


\begin{lemma}\label{lemma:psemisimple}
  A semi-simple element of $\GL n{\F_q}$ is $p$-singular if and
  only if all irreducible factors of its characteristic \pol\ have
  $p$-power order.
\end{lemma}
\begin{proof}
  Is is enough to show that multiplication by $t$ on $\F_q[t]/(f(t))$,
  where $f(t)$ is an irreducible monic \pol\ with $f(0) \neq 0$,
  has $p$-power order if and only if $f$ has $p$-power order. But
  multiplication by $t$ has $p$-power order 
  if and only if $f(t)$ divides $t^{c}-1$ for some $p$-power $c$ if and only if 
  the order of $f$ divides $c$ by \cite[Lemma 3.6]{lidlnieder97}.
\end{proof}

As in
Section~\ref{sec:semi-simple-elements} we conclude from Lemma~\ref{lemma:psemisimple} that the
$p$-primary generating functions obey the recurrence relation
\begin{equation}
  \label{eq:recurFprim}
  F_{r+1}(x,q,p) = T_{\IM{}{q,p}} F_r(x,q,p), \qquad r \geq 1
\end{equation}
with base function $F_1(x,q,p)=1-x$.


We need a little preparation before we can solve \eqref{eq:recurFprim}.
The following observation is  the $p$-primary analogue of a
fundamental classical result. 

\begin{thm}\label{thm:thm320prim}
  The product of all monic irreducible \pol s in $\F_q[t]$ with nonzero
  constant term, $p$-power order, and degree dividing $n \geq 1$ is $t^{(q^n-1)_p}-1$.
\end{thm}
\begin{proof}
  We already know from the classical theorem \cite[Theorem 3.20]{lidlnieder97}
  that each irreducible
  factor in $t^{q^n-1}-1$ occurs exactly once in the factorization. If
  $f$ is an irreducible factor of $t^{(q^n-1)_p}-1$, then the order of
  $f$ divides $(q^n-1)_p$ by \cite[Corollary
  3.7]{lidlnieder97}. Conversely, let $f$, $f(0) \neq 0$, be a monic
  irreducible \pol\ of degree dividing $n$ and of order $p^e$ for some
  $e \geq 0$. Then $f$ divides $t^c-1$ where $c=\GCD{p^e,q^n-1}$
  and hence $f$ also divides
  $t^{(q^n-1)_p}-1$ \cite[Lemma 3.6, Corollary 3.7]{lidlnieder97}.
\end{proof}

By comparing the degree of $t^{(q^n-1)_p}-1$ with the total degree of
its canonical factorization \cite[Theorem 1.59]{lidlnieder97} we
obtain $p$-primary versions
\begin{equation}
  \label{eq:IMdpq}
  (q^n-1)_p = \sum_{d \mid n} d\IM d{q,p}, \qquad
  n\IM n{q,p} =  \sum_{d \mid n} \mu(n/d) (q^d-1)_p
\end{equation}
of the classical relations \eqref{eq:A}. See Section~\ref{sec:intro}
for the definition of $\IM d{p,q}$.

We are now ready to prove Theorem~\ref{thm:Frecurpprim}. The present
proof, a tremendous improvement of the original lengthy case-by-case
checking, is due to an anonymous referee. A similar argument can be
used to prove Theorem~\ref{thm:chirgl}. 

\begin{proof}[Proof of Theorem~\ref{thm:Frecurpprim}]
  We must show  that the power series
  \begin{equation*}
  F_{r}(x,q,p) = \prod_{n \geq 1} (1-x^n)^{a_{r}(n,q,p)},
  \qquad a_{r}(n,q,p) = \frac{1}{n}\sum_{d \mid n} \mu(n/d)(q^d-1)_p^{r-1}  
  \end{equation*}
solve recurrence \eqref{eq:recurFprim}. Indeed, the $\IM{}{q,p}$-transform of
$F_r(x,q,p)$ equals $F_{r+1}(x,q,p)$ because in the product
\begin{equation*}
  T_{\IM{}{q,p}}F_r(x,q,p) = \prod_{d \geq 1} F_r(x^d,q^d,p)^{\IM d{q,p}}
  = \prod_{d,n \geq 1} (1-x^{nd})^{\IM d{q,p}a_r(n,q^d,p)} =
  \prod_{N \geq 1} (1-x^N)^{\sum_{d \mid N} a_r(N/d,q^d,p) \IM
    d{q,p}} 
\end{equation*}
the exponent of the $(1-x^N)$-factor is
\begin{multline*}
  \sum_{d\mid N} a_r(N/d,q^d,p) \IM{d}{q,p} =
  \sum_{d \mid N} \frac{d}{N} \sum_{e \mid (N/d)}
  \mu(N/de)(q^{de}-1)^{r-1}_p \IM{d}{q,p} \\ \stackrel{\text{\eqref{eq:IMdpq}}}{=}
  \frac{1}{N} \sum_{d \mid N} \sum_{e \mid (N/d)}
  \mu(N/de)(q^{de}-1)^{r-1}_p \sum_{f \mid d} \mu(d/f)(q^f-1)_p =
  \frac{1}{N} \sum_{f \mid d_1 \mid d_2 \mid N}
  \mu(N/d_2)(q^{d_2}-1)^{r-1}_p \mu(d_1/f)(q^f-1)_p \\=
  \frac{1}{N} \sum_{d \mid N} \mu(N/d)(q^d-1)^{r-1}_p(q^d-1)_p =
  \frac{1}{N} \sum_{d \mid N} \mu(N/d)(q^d-1)^{r}_p =
  a_{r+1}(N,q,p)
\end{multline*}
In this calculation we used that, for fixed $f$
and $d_2$, the sum
\begin{equation*}
  \sum_{d_1 \colon f \mid d_1 \mid d_2} \mu(d_1/f) =
  \begin{cases}
    1 & f=d_2 \\ 0 & f < d_2 
  \end{cases}
\end{equation*}
contributes only when $f=d_1=d_2$.
\end{proof}

\begin{cor}\label{cor:rpchirecur}     
  The $(r+1)$th, $r \geq 0$, $p$-primary equivariant reduced \Euc s satisfy the recursion
    \begin{equation*}
      \rchi_{r+1}(n,q,p) =
      \begin{cases}
        1 & n=0 \\
        \displaystyle -\frac{1}{n}\sum_{1 \leq j \leq n} (q^j-1)_p^{r} \rchi_{r+1}(n-j,q,p) & n>0
      \end{cases}
    \end{equation*}
  \end{cor}
  \begin{proof}
   Apply Lemma~\ref{lemma:varEulertrans} to the formula of
   Theorem~\ref{thm:Frecurpprim}. 
  \end{proof}

\begin{figure}[t]
  \centering
  \begin{tabular}[t]{>{$}c<{$}|*{10}{>{$}c<{$}}}
  -\chi_r(n,2,3) & n=1 & n=2 & n=3 & n=4 & n=5 & n=6  & n=7 & n=8 & n=9
    & n=10\\ \hline
r=1 & 1 & 0 & 0 & 0 & 0 & 0 & 0 & 0 & 0 & 0 \\
r=2 & 1 & 1 & -1 & 0 & 0 & 1 & -1 & -1 & 1 & 0 \\
r=3 & 1 & 4 & -4 & -6 & 6 & 16 & -16 & -49 & 49 & 72 \\
r=4 & 1 & 13 & -13 & -78 & 78 & 403 & -403 & -2236 & 2236 & 10413 \\
r=5 & 1 & 40 & -40 & -780 & 780 & 10960 & -10960 & -134590 & 134590 &
                                                                 1500408 
 \end{tabular}
  \caption{$3$-primary equivariant reduced \Euc s of the  $\GL
    n{\F_2}$-poset $\Li^*_n(\F_2)$}
  \label{fig:rchigln23}
\end{figure}


 We now look more closely at the sequence
 $\IM{}{q,p} = (\IM{d}{q,p})_{d \geq 1}$ recording  the number of
 irreducible monic \pol s of $p$-power order, nonzero constant term,
 and degree $d$ in $\F_q[t]$.  If $q$ is a power of $p$,
 $\IM 1{q,p}=1$ and $\IM n{q,p}=0$ for all $n > 1$, as the only \pol\
 that fulfills the requirements is $f(t)=t-1$ \cite[Corollary
 3.2]{lidlnieder97}.
In the
more interesting case where $p$ and  $q$ are prime, consider the subgroup
$\gen q$ of $\Z_p^\times$ generated by $q$ in the unit topological
group $\Z_p^\times$ of the ring $\Z_p$ of $p$-adic integers.

  \begin{lemma}\label{lemma:closure}
    When $p \nmid q$, the sequence $\IM{}{q,p}$ and the function
    $F_r(x,q,p)$, $r \geq 1$, depend only on the closure
    $\overline{\gen q}$ in $\Z_p^\times$ of $\gen q$.
  \end{lemma}
  \begin{proof}
    The integer $\IM d{q,p}$ depends only on the images
    of $\gen q$ under the continuous \cite[Chp 1, \S3]{serre73} homo\m
    s $\Z_p^\times \to (\Z/p^n\Z)^\times$, $n \geq 1$.  But $\gen q$ and
    $\overline{\gen q}$ have the same image in the discrete
    topological space $(\Z/p^n \Z)^\times$.
  \end{proof}

  We say that $q_1$ and $q_2$, prime powers prime to $p$, are
  $p$-equivalent if $\overline{\gen {q_1}}=\overline{\gen {q_2}}$ in
  $\Z_p^\times$.  More explicitly, 
$q_1$ and $q_2$ are $p$-equivalent if and only if
  $O(q_1,p) = O(q_2,p)$ \cite[\S3]{bmo1} where,
  for a prime power $q$ prime to $p$, 
  $O(q,p)$ denotes the integer pair 
  \begin{equation}\label{defn:nupq}
    O(q,p) =
    \begin{cases}
      (q \bmod 8, \nu_2(q^2-1)) & p=2 \\
      (\ord pq, \nu_p(q^{\ord pq}-1)) & p>2 
    \end{cases}
  \end{equation}
  The multiplicative order, $\ord pq$, was defined in
  Section~\ref{sec:intro}.  The following well-known lemma can be used
  to calculate $p$-adic valuations.

\begin{lemma}[Lifting the Exponent]\label{lemma:LTE}
  Let $p$ be any prime and $n \geq 1$ any natural number.
  \begin{enumerate}

  \item If $a \equiv b \not\equiv 0 \bmod p$ and $\gcd(p,n) =1$ then
      $\nu_p(a^n-b^n) = \nu_p(a-b)$

  \item 
    If $p$ is odd and $a \equiv b \not\equiv 0 \bmod p$ then
    $\nu_p(a^n-b^n) = \nu_p(a-b) + \nu_p(n)$

  \item If $a$ and $b$ are odd and $n$ even then
            $\nu_2(a^n-b^n) = \nu_2(a-b)+\nu_2(a+b)+\nu_2(n)-1$.

\item          
    If $a$ and $b$ are odd and $a \equiv b \bmod 4$ then
      $\nu_2(a^n-b^n) = \nu_2(a-b) + \nu_2(n)$
      
  \end{enumerate}
\end{lemma}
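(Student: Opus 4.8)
The plan is to establish (1) by a direct factorization and then bootstrap (2), (3), and (4) from it by isolating how $\nu_p$ changes when one passes from the pair $a,b$ to $a^p,b^p$ (for odd $p$) or to $a^2,b^2$ (for $p=2$). For (1), I would write $a^n-b^n=(a-b)\sum_{i=0}^{n-1}a^ib^{n-1-i}$. Since $a\equiv b\bmod p$, each summand is congruent to $b^{n-1}$ modulo $p$, so the cofactor is $\equiv n\,b^{n-1}\bmod p$; as $\gcd(p,n)=1$ and $p\nmid b$, this cofactor is prime to $p$, and therefore $\nu_p(a^n-b^n)=\nu_p(a-b)$.

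For (2) the heart of the matter is the single step $\nu_p(a^p-b^p)=\nu_p(a-b)+1$ for odd $p$. Writing $a=b+d$ with $p\mid d$ and using $(b+d)^i\equiv b^i+ib^{i-1}d\bmod p^2$, the cofactor $S=\sum_{i=0}^{p-1}a^ib^{p-1-i}$ reduces to $S\equiv pb^{p-1}+\binom{p}{2}b^{p-2}d\bmod p^2$. Because $p$ is odd, $\binom{p}{2}$ is divisible by $p$ and $p\mid d$, so the second term vanishes modulo $p^2$ while $pb^{p-1}$ has valuation exactly $1$; hence $\nu_p(S)=1$. Iterating this step $k$ times gives $\nu_p(a^{p^k}-b^{p^k})=\nu_p(a-b)+k$, and then writing $n=p^km$ with $p\nmid m$ and applying (1) to the base $a^{p^k},b^{p^k}$ with exponent $m$ yields $\nu_p(a^n-b^n)=\nu_p(a-b)+\nu_p(n)$.

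The two $p=2$ statements follow the same template with $a\mapsto a^2$ as the elementary step, and this is where the characteristic-$2$ anomaly must be tracked. For (4), when $a\equiv b\bmod 4$ one has $a^2-b^2=(a-b)(a+b)$ with $a+b\equiv 2\bmod 4$, so $\nu_2(a^2-b^2)=\nu_2(a-b)+1$; moreover odd squares satisfy $a^2\equiv b^2\equiv 1\bmod 8$, so the hypothesis mod $4$ is preserved and the step iterates, giving the result after combining with (1) on the odd part of $n$ exactly as in (2). For (3), where no congruence mod $4$ is assumed, I would first peel off the odd part $m$ of $n$: applying (1) to $a^m-b^m$, and applying (1) with $b$ replaced by $-b$ to $a^m+b^m$, shows $\nu_2(a^m-b^m)=\nu_2(a-b)$ and $\nu_2(a^m+b^m)=\nu_2(a+b)$. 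Then the telescoping factorization
\begin{equation*}
  a^n-b^n = (a^m-b^m)(a^m+b^m)\prod_{j=1}^{\,\nu_2(n)-1}\big((a^m)^{2^j}+(b^m)^{2^j}\big),
\end{equation*}
together with the observation that each factor $(a^m)^{2^j}+(b^m)^{2^j}$ with $j\ge 1$ is $\equiv 2\bmod 8$ (again because odd squares are $1\bmod 8$) and hence has $\nu_2=1$, assembles the total valuation as $\nu_2(a-b)+\nu_2(a+b)+(\nu_2(n)-1)$.

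The main obstacle is precisely the $p=2$ case: the clean doubling identity $\nu_2(a^2-b^2)=\nu_2(a-b)+1$ needs $a\equiv b\bmod 4$, which is exactly why (3) and (4) split into two regimes, and the delicate bookkeeping is in tracking the extra factor $a+b$ and the $-1$ correction produced by the single defective factor in the telescoping product. Everything else reduces to the elementary binomial estimate modulo $p^2$ and the geometric-series factorization of $a^n-b^n$.
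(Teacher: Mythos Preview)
Your argument is correct and follows the standard proof of the Lifting the Exponent lemma. Note, however, that the paper does not supply a proof at all: it introduces Lemma~\ref{lemma:LTE} as a ``well-known lemma'' and states it without proof, so there is no paper proof to compare against. Your write-up is a clean self-contained justification. The only minor point worth tightening is the use of $(b+d)^i\equiv b^i+ib^{i-1}d\pmod{p^2}$ in part~(2): this holds because every higher-order term $\binom{i}{j}b^{i-j}d^j$ with $j\ge 2$ is divisible by $d^2$ and hence by $p^2$; you might make that explicit. Otherwise the bootstrapping from (1) via the single-step valuations $\nu_p(a^p-b^p)=\nu_p(a-b)+1$ (odd $p$) and $\nu_2(a^2-b^2)=\nu_2(a-b)+1$ (under $a\equiv b\bmod 4$), together with the telescoping factorization for (3), is exactly the standard route and is carried out correctly.
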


 We first consider the situation when $p$ is an {\em odd\/} prime.  Let
$g$ be a prime primitive root mod $p^2$ \cite[Definition p
41]{irelandrosen90}. Such a prime $g$ always exists by the Dirichlet
Density Theorem \cite[Chp 16, \S1, Theorem 1]{irelandrosen90} and the
congruence class of $g$ generates $(\Z/p^n\Z)^\times$ for all
$n \geq 1$ \cite[Chp 4, \S1, Theorem 2]{irelandrosen90}.  By
\cite[Lemma 1.11.(a)]{bmo2} it suffices to consider $p$-primary
generating functions $F_r(x, (g^s)^{p^e},p)$ at the prime powers
$(g^s)^{p^e}$ where $s$ divides $p-1$ and $e \geq 0$.

  \begin{lemma}\label{lemma:gpm1}
    Let $p$ be an odd prime and  $q=g^{p-1}$. For all $n \geq 1$ and $r \geq 0$,
    \begin{equation*}
      \qquad F_{r+1}(x,q,p) = \exp( -\sum\limits_{n \geq 1} (pn)_p^r
      \frac{x^n}{n}), \qquad
      \IM n{q,p} =
      \begin{cases}
        p & n =1 \\
        p-1 & n = n_p > 1 \\
        0 & \text{otherwise}
      \end{cases}
    \end{equation*}
    and, for any $e \geq 0$, $\IM n{q^{p^e},p} = p^e \IM n{q,p}$ and
      $F_{r+1}(x,q^{p^e},p) = F_{r+1}(x,q,p)^{p^{re}}$.
  \end{lemma}
  \begin{proof}
    Since $(q^n-1)_p = pn_p$, Theorem~\ref{thm:Frecurpprim} immediately gives
    the formula for $F_{r+1}(x,q,p)$.  An elementary calculation verifies
    $\sum_{d \mid n} d \IM d{q,p} = (q^n-1)_p$ when the integers $\IM
    d{q,p}$ are defined as in the lemma. Since $(q^{dp^e}-1)_p =
    p^e(q^d-1)_p$, we get $\IM n{q,p^{p^e}} = p^e \IM n{q,p}$ by
    \eqref{eq:IMdpq} and 
    $F_{r+1}(x,q^{p^e},p) = F_{r+1}(x,p,q)^{p^e}$ by Theorem~\ref{thm:Frecurpprim}.
  \end{proof}

  \begin{lemma}\label{lemma:Fpexpansion}
    $\displaystyle \exp(-\sum_{n \geq 1} (pn)_p^r \frac{x^n}{n}) =  \prod_{n \geq 0} \Big(
    \frac{(1-x^{p^n})^p}{1-(x^{p^n})^p} \Big)^{p^{(r-1)(n+1)}}$ for
      any prime $p$.
  \end{lemma}
  \begin{proof}
    Let $F(x) = \exp(-\sum_{n \geq 1} (pn)_p^r \frac{x^n}{n})$.
     The rewriting
    \begin{equation*}
      -\sum_{n \geq 1} (pn)_p^r\frac{x^n}{n} =
      -\sum_{p \nmid n} p^r\frac{x^n}{n} -\sum_{p \mid n}
      (pn)_p^r\frac{x^n}{n} =
      -p^{r-1}\sum_{p \nmid n} p\frac{x^n}{n}
       - p^{r-1}\sum_{n \geq 1} (pn)_p^r\frac{(x^p)^n}{n} 
    \end{equation*}
    translates to the functional equation 
    \begin{equation*}
      F(x) = \Big( \frac{(1-x)^p}{1-x^p} \Big)^{p^{r-1}} F(x^p)^{p^{r-1}}
    \end{equation*}
    Repeated use of this relation
    leads to the product expansion of the lemma.
  \end{proof}

  \begin{cor}
    Let $p$ be an odd prime.
    When $(q-1)_p= p$, i.e.\ $O(q,p)=(1,1)$, and $r \geq 0$
    \begin{equation*}
      F_{r+1}(x,q,p) = 
      \prod_{n \geq 0} \Big(
      \frac{(1-x^{p^n})^p}{1-(x^{p^n})^p} \Big)^{p^{(r-1)(n+1)}}
    \end{equation*}
  \end{cor}
  \begin{proof}
    Combine Lemma~\ref{lemma:gpm1} and Lemma~\ref{lemma:Fpexpansion}.
  \end{proof}

  \begin{lemma}\label{lemma:gs}
    Let $p$ be an odd prime and $q=g^s$ where $s \geq 1$ and $st=p-1$ for some $t>1$. Then 
    \begin{equation*}
        F_{r+1}(x,q^{p^e},p)^t = \frac{(1-x)^t}{1-x^t}
       F_{r+1}(x^t,(g^{p-1})^{p^e},p), \qquad
      \IM n{q^{p^e},p} =
      \begin{cases}
        1 & n=1 \\
        (p^{1+e}-1)/t & n=t \\
        sp^e & t \mid n, n/t = (n/t)_p >1 \\
        0 & \text{otherwise}
      \end{cases}
    \end{equation*}
    for all
    $e \geq 0$ and $r \geq 0$.
  \end{lemma}
  \begin{proof}
    Note that $(q^{np^e}-1)_p$ equals  $p^e(pn/t)_p$ if $n$ is
    divisible by $t$ and $1$ if not.
   The $t$th power of the generating function $F_{r+1}(x,q^{p^e},p)$
   from Theorem~\ref{thm:Frecurpprim} is
   \begin{multline*}
     F_{r+1}(x,q^{p^e},p)^t = \exp(-t\sum_{n \geq 1} (q^{np^e}-1)_p^r
     \frac{x^n}{n})
     = \exp( -t\sum_{t \nmid n} \frac{x^n}{n} - t\sum_{t
       \mid n} p^{re}(pn)_p^r \frac{x^n}{n})\\
     = \exp( -t\sum_{t \nmid n} \frac{x^n}{n}
     - \sum_{n \geq 1} p^{re} (pn)_p^r \frac{(x^t)^n}{n})
      =
      \frac{(1-x)^t}{1-x^t} F_{r+1}(x^t,(g^{p-1})^{p^e},p)
   \end{multline*}
   An elementary calculation confirms that
    $\sum_{d \mid n} d \IM d{q,p} = (q^n-1)_p$ when the integers $\IM
    d{q,p}$ are defined as in the lemma.
  \end{proof}

  \begin{exmp} 
    The $3$-equivalence classes of prime powers prime to $3$ are
  represented by $2^{3^e}$ and $4^{3^e}$ 
  with $O(2^{3^e},3)=(2,1+e)$ and $O(4^{3^e},3)=(1,1+e)$, $e \geq
  0$ \cite[Lemma 1.11.(a)]{bmo2}  as $2$ is a primitive root modulo
  $9$. The $3$-equivalence classes of $2$, $2^3$, $4$, and $4^3$
  contain the prime powers
  \begin{alignat*}{3}
    & 2, 5, 11, 23, 29, 32, 41, 47, 59, \ldots & &\qquad &
    &8, 17, 71, 89, 125,179, 197, 233, \ldots \\
    & 4, 7, 13, 16, 25, 31, 43, 49, 61,  \ldots & &\qquad &
    & 19, 37, 64, 73, 127, 181, 199, 289, \ldots
  \end{alignat*}
  The $3$-primary generating functions satisfy
  \begin{equation*}
    F_{r+1}(x,4^{3^e},3) = \exp(- \sum_{n \geq 1} (3n)_3^r
    \frac{x^n}{n})^{3^{re}} =
    \prod_{n \geq 0} \left( \frac{(1-x^{3^n})^3}{1-(x^{3^n})^3} \right)^{3^{(r-1)(n+1)+re}}
    \quad
    F_{r+1}(x,2^{3^e},3)^2 = \frac{1-x}{1+x} F_{r+1}(x^2,4^{3^e},3)
  \end{equation*}
  according to Lemma~\ref{lemma:gpm1}, \ref{lemma:Fpexpansion}, \ref{lemma:gs}.
\end{exmp}


When $p$ is odd, $p \nmid q$ and $\ord pq$ is big, many $p$-primary 
equivariant reduced \Euc s vanish.

\begin{prop}\label{prop:vanish}
  Assume $p$ is odd, prime to $q$ and $\ord pq > 2$. Then
  $\rchi_{r+1}(n,q,p)=0$ unless $n \equiv 0,1 \bmod d$,
  $\rchi_{r+1}(n,q,p)+\rchi_{r+1}(n+1,q,p)=0$ when $n \equiv 0 \bmod
  d$, and $\rchi_{r+1}(d,q,p) = -\frac{1}{d}((q^d-1)_p^r-1)$
  where $d=\ord pq$ and $r \geq 0$.
\end{prop}
\begin{proof}
  For $n=0$, $\rchi_{r+1}(0,q,p)=1$ by convention. For $n=1$,
  $\rchi_{r+1}(1,q,p) = -(q-1)^r_p = -1$ by
  Proposition~\ref{prop:pprimr1n1}.\eqref{prop:pprimr1n1item2}. Lemma~\ref{lemma:LTE}
  shows that $(q^{md}-1)_p =(q^d-1)_pm_p$ for any $m \geq 1$ while
  $(q^k-1)_p=1$ for any $k \geq 1$ not a multiplum of $d$. By
  Corollar~\ref{cor:rpchirecur}, $-2\rchi_{r+1}(2,q,p)
  =(q-1)_p^r\rchi_{r+1}(1,q,p) + (q^2-1)^r_p\rchi_{r+1}(0,q,p) = -1+1
  = 0$. It is now clear that we can proceed by induction using Corollary~\ref{cor:rpchirecur}. 
\end{proof}


     \begin{figure}[t]
          \centering
          \addtolength{\tabcolsep}{+6pt}
          \begin{tabular}[t]{>{$}c<{$}|*{3}{>{$}c<{$}}}
  {} & (q^n-1)_2 & \IM n{q,2}  & F_{r+1}(x,q,2)\\ \hline
  q=-3 & (4n)_2 & {}
                  {\begin{cases}
                    4 & n=1 \\ 2 & n=n_2>1 \\ 0 & \text{otherwise}
                  \end{cases}}  &
                                \displaystyle \exp(-\sum\limits_{n \geq 1} (4n)_2^r
                                  \frac{x^n}{n}) \\ \hline
  q=+3^{2^e}, e>0 & 2^e(4n)_2 & 2^e \IM n{2,-3} & F_{r+1}(x,-3,2)^{2^{re}} \\ \hline
  q=3 & {}
        \begin{cases}
          2 & 2 \nmid n \\ (4n)_2 & 2 \mid n
        \end{cases} & {}
                      \begin{cases}
                         2 & n=1 \\ 3 & n=2 \\ 2 & n=n_2>2 \\ 0 & \text{otherwise}
                      \end{cases}
   &
    \displaystyle \Big(\frac{1-x}{1+x}\Big)^{2^{r-1}} F_{r+1}(x^2,-3,2)^{2^{r-1}} \\
  \hline
  q=-3^{2^e}, e>0 & {}
                 \begin{cases}
          2 & 2 \nmid n \\ 2^e(4n)_2 & 2 \mid n
        \end{cases} & {}
                    \begin{cases}
    2 & n=1 \\
    2^{2+e}-1 & n=2 \\
    2^{1+e} & n = n_2 >2 \\
    0 & \text{otherwise}
  \end{cases} &
                \displaystyle \Big(\frac{1-x}{1+x}\Big)^{2^{r-1}} F_{r+1}(x^2,3^{2^e},2)^{2^{r-1}} 
\end{tabular}
\addtolength{\tabcolsep}{-6pt}          
          \caption{$2$-primary equivariant generating functions $F_{r+1}(x,q,2)$ for $r
          \geq 0$}
          \label{fig:genfct2}
        \end{figure}

        
Next, we consider the case $p=2$.
The $2$-equivalence classes of odd prime powers are represented by
  the $2$-adic numbers $\pm 3^{2^e}$  \cite[Lemma 1.11.(b)]{bmo2} with
\begin{equation*}
          O(\pm 3^{2^e},2) =
          \begin{cases}
            (\pm 3,3) & e=0 \\ (\pm 1,3+e) & e>0
          \end{cases} 
        \end{equation*}
The $2$-classes of $-3,3^2,3,-3^2$ contain the prime powers
\begin{alignat*}{3}
  &5, 13, 29, 37, 53, 61, 101, 109, 125, \ldots &&\qquad && 9, 25, 41, 73, 89,
  121, 137, 169, 233, \ldots \\
  &3, 11, 19, 27, 43, 59, 67, 83, 107, 131, \ldots &&\qquad
  && 7, 23, 71, 103, 151, 167, 199, 263,\ldots
\end{alignat*}
        The results for $p=2$ are summarized in
        Figure~\ref{fig:genfct2}.
When $q=-3$, $((-3)^n-1)_2 = (4n)_2$ and $F_{r+1}(x,-3,2)$ given by
                                  Theorem~\ref{thm:Frecurpprim}.  An
                                  elementary calculation shows that
                                  $\sum_{d \mid n} d \IM d{-3,2} =
                                  ((-3)^n-1)_2$ when $\IM d{-3,2}$ is
                                  as in Figure~\ref{fig:genfct2}.      
When $q=3$, the rewriting
\begin{equation*}
  -\sum_{n \geq 1} (3^n-1)^r_2 \frac{x^n}{n}
  = -\sum_{2 \nmid n} 2^r \frac{x^n}{n} -\sum_{2 \mid n} (4n)_2^r\frac{x^n}{n} 
 =  -2^{r-1}\sum_{2 \nmid n} 2 \frac{x^n}{n}
 - 2^{r-1}\sum_{n \geq 1} (4n)_2^r
 \frac{(x^2)^n}{n}
\end{equation*}
translates to
\begin{equation*}
  F_{r+1}(x,3,2) 
  =\left(\frac{1-x}{1+x}\right)^{2^{r-1}} F_{r+1}(x^2,-3,2) ^{2^{r-1}} 
\end{equation*}
An elementary calculation shows that $\sum_{d \mid n} d \IM d{3,2} =(3^n-1)_2$ when $\IM d{3,2}$ is
as in Figure~\ref{fig:genfct2}. The other cases are
similar. Lemma~\ref{lemma:Fpexpansion} gives product expansions of
the $2$-primary generating functions.

\subsection{Alternative presentations of the $p$-primary equivariant reduced \Euc s}
\label{sec:recurs-equiv-reduc-pprim}

Consider the $p$-primary version of the generating function \eqref{eq:GSp},
      \begin{equation} \label{eq:Gnxqp} 
        G_n(x,q,p) = \sum_{ r \geq 0} \rchi_{r+1}(n,q,p)x^r
         = -\delta_{1,n}
  + \rchi_2(n,q,p)x + \rchi_3(n,q,p)x^2 + \cdots, \qquad n \geq 0
      \end{equation}
      where the coefficient of $x^r$ is the $(r+1)$th $p$-primary
      reduced \Euc s $\rchi_{r+1}(n,q,p)$. (Declare
      $\rchi_{r+1}(0,q,p)$ to be $1$ for all $r \geq 0$.) We have
      $G_0(x,p,q) = \frac{x}{1-x}$ and $G_1(x,q,p)=-\frac{1}{1-x(q-1)_p}$ by
      Proposition~\ref{prop:pprimr1n1}.\eqref{prop:pprimr1n1item2}.
      If $p$ is odd, $p \nmid q$ and $d=\ord pq >2$, $G_n(x,q,p)=0$
      unless $n \equiv 0,1 \bmod d$ and 
      $G_d(x,q,p) = -\frac{1}{d} \big( \frac{1}{1-x(q^d-1)_p} -
      \frac{1}{1-x} \big) = -G_{d+1}(x,q,p)$ by
      Proposition~\ref{prop:vanish}.
      The following description of
      the power series $G_n(x,q,p)$ is
      obtained exactly as in Proposition~\ref{cor:GGL} (and
      $|\lambda|$, $T(\lambda)$, and $U(\lambda,q)$ are as there).

\begin{prop}\label{prop:primGGL}
  For $r \geq 0$ and $n \geq 1$,
  \begin{equation*}
          \rchi_{r+1}(n,q,p)=
    \frac{1}{n!} 
    \sum_{ \lambda \vdash n} (-1)^{|\lambda|} T(\lambda)
    U(\lambda,q)_p^r, \qquad
        G_n(x,q,p) = \frac{1}{n!}
        \sum_{\lambda \vdash n} (-1)^{|\lambda|} \frac{T(\lambda)}{1-xU(\lambda,q)_p}
      \end{equation*}
 \end{prop}
 Examples of Proposition~\ref{prop:primGGL} with $p=2$ and
 $q = \pm 3^{2^e}$ are
\begin{alignat*}{3}  
  1!G_1(x,3^{2^e},2) &=
                         \begin{cases}
                           \frac{-1}{1-2x} & e=0 \\
                           \frac{-1}{1-2^{2+e}x} & e>0
                         \end{cases} &&\quad &
    1!G_1(x,-3^{2^e},2) &=
    \begin{cases}
      \frac{-1}{1-2^2x} & e=0 \\
      \frac{-1}{1-2x} & e>0
    \end{cases} \\
   2!G_2(x,3^{2^e},2) &=
   \begin{cases}
     \frac{-1}{1-2^3x} +\frac{1}{1-2^2x} & e = 0 \\
     \frac{-1}{1-2^{3+e}x} + \frac{1}{1-2^{4+2e}x} & e>0 
   \end{cases}  &&\quad &
   2!G_2(x,-3^{2^e},2) &=
   \begin{cases}
     \frac{-1}{1-2^3x} + \frac{1}{1-2^4x} & e=0 \\
      \frac{-1}{1-2^{3+e}x} + \frac{1}{1-2^2x} & e>0
    \end{cases} \\
  3!G_3(x,3^{2^e},2) &=
  \begin{cases}
    \frac{-2}{1-2x} + \frac{3}{1-2^4x} + \frac{-1}{1- 2^3x} & e=0\\
    \frac{-2}{1-2^{2+e}x} + \frac{3}{1-2^{5+2e}x} +
    \frac{-1}{1-2^{6+3e}x} & e >0 
  \end{cases} &&\quad &
  3!G_3(x,-3^{2^e},2) &=
  \begin{cases}
    \frac{-2}{1-2^2x} + \frac{3}{1-2^5x} + \frac{-1}{1-2^6x} & e=0 \\
    \frac{-2}{1-2x} + \frac{3}{1-2^{4+e}x} + \frac{-1}{1-2^3x} & e>0
  \end{cases}
\end{alignat*}


Define the reciprocal $p$-primary equivariant reduced \Euc ,
$\rchi^{-1}_{r+1}(n,q,p)$,  to be
the coefficient of $x^n$ in the reciprocal of $F_{r+1}(x,q,p)$. Then
\begin{align}
    &1 + \sum_{n \geq 1} \rchi^{-1}_{r+1}(n,q,p) x^n =
      F_{r+1}(x,q,p)^{-1} \label{eq:FRpqx-1}   \\
    &1 + \sum_{r \geq 1} \rchi^{-1}_{r+1}(n,q,p) x^r = \frac{1}{n!}  
    \sum_{\lambda \vdash n} \frac{T(\lambda)}{1-xU(\lambda,q)_p} \qquad
     \rchi^{-1}_{r+1}(n,q,p)  = \frac{1}{n!}\sum_{ \lambda \vdash n}  T(\lambda)
    U(\lambda,q)_p^r 
\end{align}  
For instance, the multiplicative order $\ord 52=4$ and the sequences
of (reciprocal) third $5$-primary reduced equivariant \Euc s
\begin{align*} 
  &(\rchi_3(n,2,5))_{n \geq 0} = ( 1, -1, 0, 0, -6, 6, 0, 0, 15, -15,
    0, 0, -20, 20, 0, 0, 15, -15, 0, 0, -36 \cdots ) \\
  &(\rchi_3^{-1}(n,2,5))_{n \geq 0} = (  1, 1, 1, 1, 7, 7, 7, 7, 28,
    28, 28, 28, 84, 84, 84, 84, 210, 210, 210, 210, 492, \ldots) 
\end{align*}
illustrate Proposition~\ref{prop:vanish} and Definition~\eqref{eq:FRpqx-1}.

We noted in Section~\ref{sec:altern-pres-equiv} that
$\rchi^{-1}_2(n,q) = q^{n-1}(q-1)$ counts semi-simple
classes in $\GL n{\F_q}$. The following corollary is the $p$-primary
analogue.
 

       \begin{cor}\label{cor:F2p}
         The coefficient of $x^n$ in the power series 
         \begin{equation*}
           F_2(x,q,p)^{-1} =T_{\IM {}{q,p}} (1-x)^{-1} =
              \exp\big(\sum_{n \geq 1} (q^n-1)_p \frac{x^n}{n} \big) =
              \prod_{d \geq 1}(1-x^d)^{-\IM d{q,p}}
            \end{equation*}
            is the number of $p$-singular semi-simple classes in
            $\GL n{\F_q}$. 
       \end{cor}

The recursive relations
$F_{r+1}(x,q,p)^{\pm 1} = T_{\IM{}{q,p}}
F_r(x,q,p)^{\pm 1} = T_{-\IM{}{q,p}}
F_r(x,q,p)^{\mp 1}$ give a sequence of $p$-primary \pol\ identities
\begin{equation*}  
  \rchi_{r+1}^{-\varepsilon_1}(n,q,p) =
  \sum_{\lambda \in M_n}
   \prod_{\{ d \mid \exists m \colon
     (m,d) \in B(\lambda) \}}
  \binom{ \varepsilon_1 \varepsilon_2 \IM d{q,p}}{[E(\lambda,(m,d)) \mid  (m,d) \in B(\lambda)]}
   \prod_{\{m \mid (m,d) \in B(\lambda)\}}
   \rchi_r^{-\varepsilon_2}(m,q^d,p), \qquad r \geq 1
\end{equation*}
where $\rchi_{r+1}^{-\varepsilon_1}(n,q,p) =
  \frac{1}{n!} \sum_{\lambda \vdash n}
  \varepsilon_1^{|\lambda|}T(\lambda)U(\lambda,q)_p^{r}$,
 $\rchi_r^{-\varepsilon_2}(m,q^d,p) = \sum_{\mu \vdash
   m}\varepsilon_2^{|\mu|}T(\mu)U(\mu,q^d)_p^{r-1}$ for $\varepsilon_1, \varepsilon_2 = \pm 1$.
The $r=1$ term in this sequence, a $p$-primary version of
\cite[Theorem A, B]{thevenaz92poly}, is 
 \begin{equation*}
   \rchi_2^{-\varepsilon_1}(n,q,p) =
     \frac{1}{n!}
    \sum_{\lambda \vdash n}
    \varepsilon_1^{|\lambda|}T(\lambda)U(\lambda,q)_p =
     \begin{cases}
      \displaystyle \sum_{\lambda \vdash n} (-1)^{|\lambda|} \prod_{d \in
    B(\lambda)} \binom{-\varepsilon_1 \IM d{q,p}}{E(\lambda,d)} &
  \varepsilon_2=-1\\
  \displaystyle \sum_{\lambda \in M_n} \prod_d \binom{\varepsilon_1\IM
                                                                 d{q,p}}{[E(\lambda,(m,d))
                                                                 \mid
                                                                 (m,d)\in
                                                                 B(\lambda)]}  &
  \varepsilon_2=+1
                                                             \end{cases}
   \end{equation*}
where we used that $\rchi_1(m,q^d,p)=-\delta_{1,m}$
(Proposition~\ref{prop:pprimr1n1}.\eqref{prop:pprimr1n11}) contributes only for
$m=1$ and
$\rchi^{-1}_1(m,q^d,p)=1$ for all $m \geq 1$.

When $q$ is a power of $p$, the identity is the only $p$-singular
semi-simple element in $\GL n{\F_q}$ and the generating function of
Corollary~\ref{cor:F2p} is
$F_2(x,q,p)^{-1} = (1-x)^{-1} = 1 + \sum_{n \geq 1} x^n$. When
$p \nmid q$, all $p$-singular classes are semi-simple so
     \begin{gather*}
          1+ \sum_{n \geq 1}  |\GL n{\F_q}_p/\GL n{\F_q}|  x^n =F_2(x,q,p)^{-1} 
        \end{gather*}
        where $|\GL n{\F_q}_p/\GL n{\F_q}|$ is the number of $p$-singular classes.
For instance, the groups $\GL 2{\F_q}$ and $\GL 3{\F_q}$ contain
       \begin{align*}
         \rchi_2^{-1}(2,q,p) &=
         \frac{1}{2!}((q^2-1)_p + (q-1)^2_p) \stackrel{\varepsilon_2=-1}{=}
         \binom{\IM 1{q,p}}{2} - \binom{-\IM 2{q,p}}{1}                                     
         \\ &\stackrel{\varepsilon_2=+1}{=}
         \binom{\IM 2{q,p}}{1} + \binom{\IM 1{q,p}}{1} + \binom{\IM
             1{q,p}}{2} \\
         \rchi_2^{-1}(3,q,p) &=
         \frac{1}{3!}(2(q^3-1)_p+3(q-1)_p(q^2-1)_p+(q-1)^3_p) \\&\stackrel{\varepsilon_2=-1}{=}
          -\binom{-\IM 3{q,p}}{1} + \binom{-\IM 2{q,p}}{1} \binom{-\IM 1{q,p}}{1}                                                                                       -\binom{-\IM 1{q,p}}{3}                                                                       
         \\ &\stackrel{\varepsilon_2=+1}{=}
         \binom{\IM 3{q,p}}{1} + \binom{\IM 1{q,p}}{1} + \binom{\IM
           1{q,p}}{1} \binom{\IM 2{q,p}}{1} + \binom{\IM 1{q,p}}{1,1}
         + \binom{\IM 1{q,p}}{3} 
       \end{align*}
         $p$-singular conjugacy classes when $p \nmid q$.

\section*{Acknowledgments}
\label{Acknowledgments}
I warmly thank the participants in a discussion thread at the internet
site MathOverflow \cite{ofir2016} for some extremely helpful hints and
two anonymous referees for a dramatic shortening and improvement of
the original version of this article.  I used the computer algebra
system Magma \cite{magma} for concrete and experimental computations
and the On-Line Encyclopedia of Integer Sequences for reference.

\def\cprime{$'$} \def\cprime{$'$} \def\cprime{$'$} \def\cprime{$'$}
  \def\cprime{$'$}
\providecommand{\bysame}{\leavevmode\hbox to3em{\hrulefill}\thinspace}
\providecommand{\MR}{\relax\ifhmode\unskip\space\fi MR }
\providecommand{\MRhref}[2]{%
  \href{http://www.ams.org/mathscinet-getitem?mr=#1}{#2}
}
\providecommand{\href}[2]{#2}

\end{document}